\newcolumntype{M}[1]{>{\centering\arraybackslash}m{#1}}
\newcolumntype{N}{@{}m{0pt}@{}}
\newenvironment{dedication}
{\begin{quotation}\begin{center}\begin{em}}
{\end{em}\end{center}\end{quotation}}
\begin{document}

\begin{frontmatter}   

\titledata{Improved bounds for hypohamiltonian graphs}{}           

\authordata{Jan Goedgebeur}            
{Department of Applied Mathematics, Computer Science \& Statistics, Ghent University, Krijgslaan 281-S9, 9000 Ghent, Belgium}    
{jan.goedgebeur@ugent.be}                     
{Both authors are supported by a Postdoctoral Fellowship of the Research Foundation Flanders (FWO).}          

\authordata{Carol T.\ Zamfirescu}            
{Department of Applied Mathematics, Computer Science \& Statistics, Ghent University, Krijgslaan 281-S9, 9000 Ghent, Belgium}    
{czamfirescu@gmail.com}
{}                                       

\keywords{Hamiltonian, hypohamiltonian, planar, girth, cubic graph, exhaustive generation.}               
\msc{05C10, 05C38, 05C45, 05C85.}                       

\begin{dedication}
In loving memory of Ella.
\end{dedication}

\begin{abstract}
A graph $G$ is \emph{hypohamiltonian} if $G$ is non-hamiltonian and $G - v$ is hamiltonian for every $v \in V(G)$. In the following, every graph is assumed to be hypohamiltonian. Aldred, Wormald, and McKay gave a list of all graphs of order at most 17. In this article, we present an algorithm to generate all graphs of a given order and apply it to prove that there exist exactly 14~graphs of order~18 and 34~graphs of order~19. We also extend their results in the cubic case. Furthermore, we show that (i)~the smallest graph of girth~6 has order~25, (ii)~the smallest planar graph has order at least~23, (iii)~the smallest cubic planar graph has order at least~54, and (iv)~the smallest cubic planar graph of girth~5 with non-trivial automorphism group has order~78.

\bigskip

\noindent
\textbf{Important note:} the version of this manuscript which was published in~\cite{GZ} contained an error in the definition of \textit{good $A$-edge} on page 239. We fixed this error in the current manuscript and also adjusted the code of our generator for hypohamiltonian graphs accordingly~\cite{genhypo-site}. Because of this error, theoretically some hypohamiltonian graphs might have been missed by the program. However, we reran all computations which were reported in~\cite{GZ} and no graphs were missed by the old version of the program.
\end{abstract}

\end{frontmatter}   


\section{Introduction}
\label{section:intro}

Throughout this paper all graphs are undirected, finite, connected, and neither contain loops nor multiple edges, unless explicitly stated otherwise. A graph is \emph{hamiltonian} if it contains a cycle visiting every vertex of the graph. Such a cycle or path is called \emph{hamiltonian}. A graph $G$ is \emph{hypohamiltonian} if $G$ is non-hamiltonian, and for every $v \in V(G)$ the graph $G - v$ is hamiltonian.

We call a vertex \emph{cubic} if it has degree~3, and a graph \emph{cubic} if all of its vertices are cubic. Let $G$ be a graph. We use $\deg(v)$ to denote the degree of a vertex~$v$ and $\Delta(G) = \max_{v \in V(G)} \deg(v)$. The \emph{girth} of a graph is the length of its shortest cycle. A cycle of length~$k$ will be called a \emph{$k$-cycle}. For $S \subset V(G)$, $G[S]$ shall denote the graph induced by $S$. A subgraph $G' = (V',E') \subset G = (V,E)$ is \emph{spanning} if $V' = V$. For a set $X$, we denote by $|X|$ its cardinality. We refer to~\cite{Di10} for undefined notions.

The study of hypohamiltonian graphs was initiated in the early sixties by Sousselier~\cite{So63}, and Thomassen made numerous important contributions~\citep{Th74-1,Th74-2,Th76,Th78,Th81}; for further details, see the survey of Holton and Sheehan~\cite{HS93} from 1993. For more recent results and new references not contained in the survey, we refer to the article of Jooyandeh, McKay, \"{O}sterg{\aa}rd, Pettersson, and the second author~\cite{JMOPZ}.

In 1973, Chv\'{a}tal showed~\cite{Ch73} that if we choose $n$ to be sufficiently large, then there exists a hypohamiltonian graph of order~$n$. We now know that for every~$n \ge 18$ there exists such a graph of order $n$, and that 18 is optimal, since Aldred, McKay, and Wormald showed that there is no hypohamiltonian graph on 17~vertices~\cite{AMW97}. Their paper fully settled the question for which orders hypohamiltonian graphs exist and for which they do not exist. For more details, see~\cite{HS93}.

They also provide a complete list of hypohamiltonian graphs with at most 17~vertices. There are seven such graphs: exactly one for each of the orders~10 (the Petersen graph), 13, and 15, four of order~16 (among them Sousselier's graph), and none of order~17. Aldred, McKay, and Wormald~\cite{AMW97} showed that there exist at least thirteen hypohamiltonian graphs with 18 vertices, but the exact number was unknown. In~\cite{mckay-site}, McKay lists all known hypohamiltonian graphs up to 26~vertices (recall that the lists with 18 or more vertices may be incomplete). He also lists all cubic hypohamiltonian graphs up to 26~vertices as well as the cubic hypohamiltonian graphs with girth at least~5 and girth at least~6 on 28 and 30~vertices, respectively. In Section~\ref{sect:results_algo} we extend the results both for the general and cubic case.

The main contributions of this manuscript are: (i)~an algorithm ${\mathfrak A}$ to generate all pairwise non-isomorphic hypohamiltonian graphs of a given order, (ii)~the results of applying this algorithm, and (iii)~an up-to-date overview of the best currently available lower and upper bounds on the order of the smallest hypohamiltonian graphs satisfying various additional properties, see Table~\ref{table:bounds_hypoham_graphs}. The algorithm ${\mathfrak A}$ is based on the algorithm of Aldred, McKay, and Wormald from~\cite{AMW97}, but is extended with several additional bounding criteria which speed it up substantially. Furthermore, ${\mathfrak A}$ also allows to generate planar hypohamiltonian graphs and hypohamiltonian graphs with a given lower bound on the girth far more efficiently.

We present ${\mathfrak A}$ in Section~\ref{section:hypoham_graphs} and showcase the new complete lists of hypohamiltonian graphs we obtained with it. In Section~\ref{section:planar_hypoham_graphs} we illustrate how ${\mathfrak A}$ can be extended to generate planar hypohamiltonian graphs and show how we applied ${\mathfrak A}$ to improve the lower bounds on the order of the smallest planar hypohamiltonian graph. (In the following, unless stated otherwise, when we say that a graph is ``smaller'' or ``the smallest'', we always refer to its order.) Using the program \emph{plantri}~\cite{brinkmann_07}, we also give a new lower bound for the order of the smallest cubic planar hypohamiltonian graph. In an upcoming paper~\cite{GZ2}, we will adapt the approach used in the algorithm ${\mathfrak A}$ to generate \emph{almost hypohamiltonian graphs}~\cite{Za15} efficiently. (A graph $G$ is \emph{almost hypohamiltonian}, if it is non-hamiltonian and there exists a vertex $w$ such that $G - w$ is non-hamiltonian, but $G - v$ is hamiltonian for every vertex $v \ne w$.)

We now discuss the numbers given in Table~\ref{table:bounds_hypoham_graphs} and start with the first row. For girth~3, Aldred, McKay, and Wormald~\cite{AMW97} showed that there is no hypohamiltonian graph of girth~3 and order smaller than~18, and Collier and Schmeichel~\cite{CS77} showed already in 1977 that there exists such a graph on 18~vertices. For girth~4, the results of~\cite{AMW97} imply that there is no such graph on fewer than 18~vertices, and the hypohamiltonian graph presented in Figure~\ref{fig:hypo_18}~(b) from Section~\ref{sect:results_algo}---this graph was given earlier and independently by McKay~\cite{mckay-site}---provides an example of order~18. The third number is due to the Petersen graph, for which it is well-known that it is the smallest hypohamiltonian graph, see e.g.~\cite{HDV67}. The smallest hypohamiltonian graph of girth~6 was obtained by the application of~${\mathfrak A}$ and is shown in Figure~\ref{fig:hypo_25_g6}.
For girth~7, Coxeter's graph provides the smallest example. Its minimality as well as the new lower bound for girth~8 follows from the application of ${\mathfrak A}$. The bound for girth~9 follows from an argument given at the end of the following paragraph. Note that, as M\'{a}\v{c}ajov\'{a} and \v{S}koviera mention in~\cite{MS11}, no hypohamiltonian graphs of girth greater than~7 are known, and Coxeter's graph is the only known cyclically 7-connected hypohamiltonian graph of girth~7.

Concerning the second row, Thomassen~\cite{Th81} showed that there exists a cubic hypohamiltonian graph of girth~4 and order~24. Petersen's graph is responsible for the second value, Isaacs' flower snark~$J_7$ and Coxeter's graph give the upper bounds for girth~6 and 7, respectively. Through an exhaustive computer-search, McKay was able to determine the order of the smallest cubic hypohamiltonian graph of girth 4,~5,~6,~and~7, establishing that the aforementioned graphs turned out to be the smallest of a fixed girth, see~\cite{mckay-site}. (Note that McKay does not state this explicitly, and that these results were verified independently by the first author.) We obtained the improved lower bounds for girth~8 and~9 through an exhaustive computer-search (see Section~\ref{sect:results_algo} for more details). Now let $G$ be a hypohamiltonian graph of girth~9 containing a non-cubic vertex~$v$. Then $\{ w \in V(G) : d(v,w) \le 4 \}$, where $d(v,w)$ denotes the number of edges in a shortest path between vertices $v$ and $w$, consists of pairwise different vertices, so $|V(G)| \ge 61$. (Recall that as is shown in Table~\ref{table:bounds_hypoham_graphs}, if $G$ is a cubic hypohamiltonian graph of girth~9, then $|V(G)| \ge 66$.)

In the third row, the first upper bound is due to Thomassen, see~\cite{Th76}, while the second one is due to Jooyandeh, McKay, \"{O}sterg{\aa}rd, Pettersson, and the second author~\cite{JMOPZ}. The previous best lower bounds were provided by~\cite{AMW97}---although that paper does not address planarity---while the current best lower bounds are proven using ${\mathfrak A}$, see Section~\ref{section:planar_hypoham_graphs}. In~\cite{JMOPZ} it was also shown that there exists a planar hypohamiltonian graph of girth~5 on 45~vertices, and that there is no smaller such graph.

The upper bound for the smallest cubic planar hypohamiltonian graph of girth~4 was established by Araya and Wiener~\cite{AW11}. The best available lower bound prior to this paper can be found in the same article~\cite{AW11} and was~44. We improved this to~54 with the program \emph{plantri}~\cite{brinkmann_07} as described in Section~\ref{section:planar_cubic}. Finally, McKay~\cite{Mc} recently proved that the order of the smallest cubic planar hypohamiltonian graph of girth~5 is 76.

In Table~\ref{table:bounds_hypoham_graphs}, we denote by ``--'' an impossible combination of properties. There are two arguments from which these impossibilities follow. Firstly, a cubic hypohamiltonian graph cannot contain triangles, as proven by Collier and Schmeichel~\cite{CS78}. Secondly, it follows from Euler's formula that a planar 3-connected graph---it is easy to see that every hypohamiltonian graph is 3-connected---has girth at most~5.

\begin{table}
\centering
\begin{tabular}{ M{2.4cm} | M{10mm} M{10mm} M{10mm} M{10mm} M{10mm} M{10mm} M{10mm} N}
  girth & 3 & 4  & \ \ 5 \ \  & 6  &  7 & 8 & 9 &\\
  \hline
  general & 18 & 18 & 10 & \ \ \textbf{25} \newline \footnotesize{18..28} & \ \ \textbf{28} \newline \footnotesize{18..28} & \textbf{36}..$\infty$ \newline \footnotesize{18..$\infty$} & \textbf{61}..$\infty$ \newline \footnotesize{18..$\infty$} &\\[15pt]
  cubic & -- & 24 & 10 & 28 & 28 & \textbf{50}..$\infty$ \newline \footnotesize{30..$\infty$} & \textbf{66}..$\infty$ \newline \footnotesize{58..$\infty$} &\\[15pt]
  planar & \textbf{23}..240 \newline \footnotesize{18..240} & \textbf{25}..40 \newline \footnotesize{18..40} & 45 & -- & -- & -- & -- &\\[15pt]
  planar and cubic & -- & \textbf{54}..70 \newline \footnotesize{44..70} & 76 & -- & -- & -- & -- &\\
\end{tabular}
\caption{Bounds for the order of the smallest hypohamiltonian graph with additional properties. The bold numbers are new bounds obtained in this manuscript; if an entry contains two lines, the upper line indicates the new bounds, while the lower line shows the previous bounds. The symbol ``--'' designates an impossible combination of properties and $a..b$ means that the number is at least $a$ and at most $b$. $b = \infty$ signifies that no graph with the given properties is known.}
\label{table:bounds_hypoham_graphs}
\end{table}

\section{Generating hypohamiltonian graphs}
\label{section:hypoham_graphs}

\subsection{Preparation}

In this section we present our algorithm ${\mathfrak A}$ to generate all non-isomorphic hypohamiltonian graphs of a given order. ${\mathfrak A}$ is based on work of Aldred, McKay, and Wormald~\cite{AMW97}, but contains essential additional bounding criteria. It is easy to see that hypohamiltonian graphs are 3-connected and cyclically 4-connected. 

We follow Aldred, McKay and Wormald~\cite{AMW97} and say that a graph $G$ is \emph{hypocyclic} if for every $v \in V(G)$, the graph~$G - v$ is hamiltonian. Hamiltonian hypocyclic graphs are usually called ``1-hamiltonian'' (see e.g.~\cite{CKL70}), so the family of all hypocyclic graphs is the disjoint union of the families of all 1-hamiltonian and hypohamiltonian graphs.

We now present several lemmas with necessary conditions for a graph to be hypocyclic or hypohamiltonian. We then use a selection of these lemmas to prune the search in the generation algorithm. This selection, i.e.\ whether to use a certain lemma or not and the order in which these lemmas should be applied, is based on experimental evidence. The efficiency of the algorithm strongly depends on the strength of these pruning criteria.

To avoid confusion, we will generally use the same terminology as Aldred, McKay, and Wormald did in~\cite{AMW97} (that is: e.g.\ type A, B, and C obstructions). Let $G$ be a possibly disconnected graph. We will denote by $p(G)$ the minimum number of disjoint paths needed to cover all vertices of $G$, by $V_1(G)$ the vertices of degree~1 in $G$, 
and by $I(G)$ the set of all isolated vertices and all isolated $K_2$'s (i.e.\ isolated edges together with their endpoints) of~$G$. Put
$$k(G) = \begin{cases}
                            0 & {\rm if } \ G \ {\rm is \ empty,}\\
                            \max \left\{ 1, \left\lceil{\frac{|V_1|}{2}}\right\rceil \right\}  & {\rm if } \ I(G) = \emptyset \ {\rm but} \ G \ {\rm is \ not \ empty},\\
                            |I(G)| + k(G - I(G)) & {\rm else}.
                        \end{cases}$$

\begin{lemma}[Aldred, McKay, and Wormald~\cite{AMW97}]\label{lem:typeA+B_obstr}
Given a hypocyclic graph $G$, for any partition $(W,X)$ of the vertices of $G$ with $|W| > 1$ and $|X| > 1$, we have that $$p(G[W]) < |X| \quad {\rm {\it and}} \quad k(G[W]) < |X|.$$
\end{lemma}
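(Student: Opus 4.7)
The plan is to derive both inequalities from a single construction powered by the hypocyclicity of $G$, and to verify that the function $k$ is always dominated by the path-cover number $p$.

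First I would bound $p(G[W])$ directly. Since $|X| \ge 2$, choose any $v \in X$; because $G$ is hypocyclic, $G-v$ admits a hamilton cycle $C$. Removing from $C$ the $|X|-1$ vertices of $X \setminus \{v\}$ decomposes what is left into at most $|X|-1$ pairwise vertex-disjoint arcs, i.e., into at most $|X|-1$ pairwise vertex-disjoint paths whose vertex sets exhaust $V(G) \setminus X = W$. Every edge of $C$ that survives the deletion has both endpoints in $W$ and therefore lies in $G[W]$, so these paths are paths of $G[W]$ and witness $p(G[W]) \le |X|-1 < |X|$.

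For the second inequality, it suffices to establish the general fact that $k(H) \le p(H)$ for every graph $H$, as then $k(G[W]) \le p(G[W]) < |X|$ follows at once. I would prove this by induction on $|V(H)|$, mirroring the recursive definition of $k$. If $I(H) \ne \emptyset$, each isolated vertex and each isolated edge is a connected component of $H$ and so accounts for at least one path in any vertex-disjoint path cover; removing these components leaves the smaller graph $H - I(H)$, which by the induction hypothesis requires at least $k(H - I(H))$ paths, giving $p(H) \ge |I(H)| + k(H - I(H)) = k(H)$. If instead $I(H) = \emptyset$ but $H$ is non-empty, then $p(H) \ge 1$, and any vertex of degree one in $H$ must be an endpoint of the path containing it (such a vertex has no second neighbour along which the path could continue); since each path has at most two endpoints, this yields $p(H) \ge \lceil |V_1(H)|/2 \rceil$, and therefore $p(H) \ge \max\bigl\{1,\, \lceil|V_1(H)|/2\rceil\bigr\} = k(H)$.

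The only real technical point is the inductive bookkeeping for $k \le p$: one must note that an isolated vertex or isolated edge truly forces at least one dedicated path in any cover (immediate, as it is a full connected component), and that single-vertex paths, which contribute only one endpoint rather than two, do not spoil the degree-one counting argument. Everything else is a direct consequence of the hamilton cycle provided by hypocyclicity, and the hypothesis $|X| > 1$ is precisely what converts the bound $p(G[W]) \le |X|-1$ into the strict inequality stated in the lemma.
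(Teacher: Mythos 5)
Your proof is correct. The paper states this lemma without proof, citing Aldred, McKay, and Wormald \cite{AMW97}; your argument---deleting the vertices of $X\setminus\{v\}$ from a hamiltonian cycle of $G-v$ to obtain a cover of $W$ by at most $|X|-1$ disjoint paths of $G[W]$, and then establishing $k(H)\le p(H)$ by the component count for $I(H)$ and the degree-one endpoint count---is precisely the standard reasoning behind the type A and type B obstructions, so there is nothing to add.
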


Now consider a graph $G$ containing a partition $(W,X)$ of its vertices with $|W| > 1$ and $|X| > 1$. If $p(G[W]) \ge |X|$, then we call $(W,X)$ a \emph{type A obstruction}, and if $k(G[W]) \ge |X|$, then we speak of a \emph{type B obstruction}. For efficiency reasons we only consider type A obstructions where $G[W]$ is a union of disjoint paths.

\begin{lemma}[Aldred, McKay, and Wormald~\cite{AMW97}]\label{lem:typeC_obstr}
Let $G$ be a hypocyclic graph, and consider a partition $(W,X)$ of the vertices of $G$ with $|W| > 1$ and $|X| > 1$ such that $W$ is an independent set. Furthermore, for some
vertex $v \in X$, define $n_1$ and $n_2$ to be the number of vertices of $X - v$ joined to one or more than one vertex of $W$, respectively. Then we have $2n_2 + n_1 \ge 2 |W|$ for every $v \in X$.
\end{lemma}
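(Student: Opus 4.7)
The plan is to exploit the defining property of a hypocyclic graph, namely that $G-v$ is hamiltonian for every vertex $v$, and then perform a two-sided count of the edges of a hamiltonian cycle that connect $W$ to $X-v$.

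First I would fix an arbitrary vertex $v \in X$ and let $C$ be a hamiltonian cycle of $G-v$, which exists by hypothesis. The key structural observation is that since $W$ is independent in $G$, it is also independent in $G-v$, so no two vertices of $W$ are consecutive on $C$. Consequently, for each $w \in W$, both of its neighbors on $C$ lie in $X - v$. Let $F$ denote the set of edges of $C$ with one endpoint in $W$ and the other in $X-v$.

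Next I would count $|F|$ in two different ways. Counting from the $W$-side, each $w \in W$ contributes exactly two edges of $F$, giving $|F| = 2|W|$. Counting from the $(X-v)$-side, each $x \in X-v$ has only two neighbors on $C$, and both of them must be actual neighbors of $x$ in $G$; therefore $x$ contributes at most $\min\{2, |N_G(x) \cap W|\}$ edges to $F$. Split $X-v$ according to the partition defining $n_1$ and $n_2$: a vertex joined to exactly one vertex of $W$ contributes at most $1$, and a vertex joined to more than one vertex of $W$ contributes at most $2$. The contribution of all other vertices of $X-v$ (those with no neighbor in $W$) is $0$. Hence $|F| \le n_1 + 2 n_2$. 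Combining the two counts gives $2|W| \le n_1 + 2 n_2$, which is the required inequality.

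The argument is completely uniform in $v \in X$, so the bound holds for every such $v$, establishing the claim. The main point where one must be careful is the asymmetric role of $v$: it must be removed so that a hamiltonian cycle exists in the first place, but the counting parameters $n_1, n_2$ are then defined relative to the \emph{remaining} part $X - v$ of $X$, which is exactly what makes the $\min\{2, |N_G(x) \cap W|\}$ bound per vertex of $X-v$ applicable. No deeper obstacle is anticipated, as the independence of $W$ immediately forces both $C$-neighbors of each $w \in W$ into $X-v$, and the degree-$2$ cap on $C$ does the rest.
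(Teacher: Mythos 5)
Your proof is correct: the double count of the hamiltonian-cycle edges between $W$ and $X-v$ (exactly $2|W|$ from the $W$-side by independence, at most $\min\{2,|N_G(x)\cap W|\}$ per vertex on the $(X-v)$-side) is exactly the standard argument behind this lemma. The paper itself gives no proof, merely citing Aldred, McKay, and Wormald, and your argument is the one underlying their type~C obstruction, so there is nothing to add.
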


If all assumptions of Lemma~\ref{lem:typeC_obstr} are met and $2n_2 + n_1 < 2 |W|$ for some $v \in X$, we call $(W,X,v)$ a \emph{type C obstruction}.

Intuitively, by a \emph{good $Y$-edge} (for $Y \in \{ A, B, C \}$) we mean an edge which works towards the destruction of a type $Y$~obstruction. We will now formally define these good $Y$-edges.

We use Lemma~\ref{lem:typeA+B_obstr} as follows. Assume $G'$ is a hypohamiltonian graph and that $G$ is a spanning subgraph of $G'$ which contains a type A obstruction $(W,X)$ (we choose $W$ such that $G[W]$ is a union of disjoint paths). 
Since $G'$ is hypohamiltonian it cannot contain a type A obstruction, so there must be an edge in $E(G') \setminus E(G)$ whose endpoints are in different components of $G[W]$. We call such an edge a \textit{good $A$-edge} for $(W,X)$. Note that there must also be an edge in $E(G') \setminus E(G)$ for which at least one of the endpoints has degree at most one in $G[W]$ (but its endpoints could be in the same component of $G[W]$). Our computational experiments indicate that when generating graphs with girth at least 4, it is more efficient to define a \textit{good $A$-edge} for $(W,X)$ as an edge in $E(G') \setminus E(G)$ for which at least one of the endpoints has degree at most one in $G[W]$.
To clarify our procedure: let $\bar G$ be the graph obtained after adding a good $A$-edge to $G$. If $\bar G[W]$ contains vertices of degree 3, we remove those vertices from $W$ (and add them to $X$) for the next iteration of the algorithm in order to guarantee that $\bar G[W]$ remains a union of disjoint paths.

Similarly, a \textit{good $B$-edge} for a type B obstruction $(W,X)$ in $G$ is a non-edge of $G$ that joins two vertices of $W$ where at least one of those vertices has degree at most one in $G[W]$. Finally, a \textit{good $C$-edge} for a type C obstruction $(W,X,v)$ in $G$ is a non-edge $e$ of $G$ for which one of the two following conditions holds:

\begin{itemize}
\item[(i)] Both endpoints of $e$ are in $W$.

\item[(ii)] One endpoint of $e$ is in $W$ and the other endpoint is in $X-v$ and has at most one neighbour in $W$.
\end{itemize}

We leave the straightforward verification that this is the only way to destroy a type B/C obstruction to the reader. Likewise, it is elementary to see that every hypohamiltonian graph has minimum degree~3---we are mentioning this explicitly, since we will later make use of the fact that hypohamiltonian graphs do not contain vertices of degree~2---, and that it is not bipartite. However, for every $k \ge 23$ there exists a hypohamiltonian graph containing the complete bipartite graph $K_{2k - 44, 2k - 44}$, as proven by Thomassen~\cite{Th81}.

\begin{lemma}[Collier and Schmeichel~\cite{CS78}]\label{lem:triangle_obstr}
Let $G$ be a hypohamiltonian graph containing a triangle $T$. Then every vertex of $T$ has degree at least~$4$.
\end{lemma}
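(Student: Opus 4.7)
The plan is to argue by contradiction. Suppose $G$ is hypohamiltonian, $T = xyz$ is a triangle of $G$, and some vertex of $T$ has degree~$3$; without loss of generality let $\deg(x) = 3$, and denote the unique third neighbor of $x$ by $w$, so that $N(x) = \{y,z,w\}$.

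The key observation will be that in $G - y$ the vertex $x$ retains only two of its three neighbors, namely $z$ and $w$. Since $G$ is hypohamiltonian, $G - y$ admits a hamiltonian cycle $C$. Because $C$ uses exactly two edges at each vertex and the only edges incident with $x$ in $G - y$ are $xz$ and $xw$, the cycle $C$ must contain both of these edges, and thus traverses $z$, $x$, $w$ consecutively.

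To finish, I reinsert $y$ into $C$. Delete the edge $zx$ from $C$ and add the two edges $zy$ and $yx$ instead; both exist in $G$ because they belong to the triangle $T$. The resulting structure is a cycle visiting every vertex of $G$ exactly once, i.e.\ a hamiltonian cycle of $G$, which contradicts the assumption that $G$ is non-hamiltonian. The same argument applies with the role of $x$ played by $y$ or $z$, so no vertex of $T$ can have degree~$3$; combined with the minimum-degree~$3$ property of hypohamiltonian graphs (noted earlier in the paper), every vertex of $T$ has degree at least~$4$.

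No step is genuinely difficult; the substance lies solely in observing that removing a triangle-neighbor of $x$ forces any hamiltonian cycle of $G - y$ to use the two surviving edges at $x$ in a prescribed order, after which the rerouting through $y$ via the third edge of the triangle is immediate.
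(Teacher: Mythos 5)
Your proof is correct. The paper itself does not prove this lemma---it is stated with a citation to Bondy---but your argument is the standard one and is exactly the rerouting technique the paper uses in its proof of Proposition~\ref{prop:diamond_obstr}: deleting a triangle vertex $y$ forces the hamiltonian cycle of $G-y$ through both remaining edges at the degree-$3$ vertex $x$ (in particular through $xz$), and replacing $xz$ by the path $x,y,z$ yields a hamiltonian cycle of $G$, a contradiction. No gaps; the reduction to the degree-exactly-$3$ case via the minimum-degree-$3$ property is also handled correctly.
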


A \emph{diamond} is a $K_4$ minus an edge and the \textit{central edge} of a diamond is the edge between the two cubic vertices.

\begin{proposition}\label{prop:diamond_obstr}
Let $G$ be a hypohamiltonian graph containing a diamond with vertices $a,b,c,d$ and central edge $ac$. Then the degrees of $a$ and $c$ (in $G$) are at least~$5$.
\end{proposition}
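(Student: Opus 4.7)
The plan is a short contradiction argument built on a hamiltonian cycle of $G - c$ guaranteed by hypohamiltonicity, together with the fact that the other ``central'' vertex $c$ of the diamond is adjacent to both $a$ and the non-adjacent pair $b, d$. Since $a$ lies on the triangle $abc$, Lemma~\ref{lem:triangle_obstr} of Bondy already gives $\deg(a) \ge 4$, so I only need to rule out $\deg(a) = 4$.

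Assume, for contradiction, that $\deg(a) = 4$, and let $x$ denote the unique neighbour of $a$ outside the diamond, so that $N_G(a) = \{b, c, d, x\}$. Pick any hamiltonian cycle $H$ of $G - c$. In $G - c$ the vertex $a$ has exactly the three neighbours $b, d, x$, and $H$ uses precisely two of the three incident edges $ab, ad, ax$. By pigeonhole, at least one of $ab, ad$ belongs to $H$. The key move is then a splice: if $ab \in H$, replace the sub-path $a - b$ of $H$ by $a - c - b$; the new edges $ac$ and $cb$ lie in $E(G)$, neither appears in $H$ (because $H \subseteq G - c$), and the resulting object is a cycle passing through every vertex of $G - c$ together with $c$, i.e.\ a hamiltonian cycle of $G$, contradicting non-hamiltonicity. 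If instead $ad \in H$, the analogous splice of $a - d$ into $a - c - d$ works identically. Hence $\deg(a) \ge 5$. Since $a$ and $c$ play symmetric roles in the diamond (both are the degree-$3$ vertices adjacent to each of $b$ and $d$), repeating the argument with $a$ and $c$ interchanged also yields $\deg(c) \ge 5$.

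I do not foresee a serious obstacle. The only delicate point is the pigeonhole step, and it depends crucially on the hypothesis $\deg(a) = 4$: if $a$ had two additional external neighbours $x_1, x_2$, then $H$ could use the edges $a x_1$ and $a x_2$ simultaneously and the splice via $c$ would be unavailable. This matches the sharpness of the stated bound~$5$, and explains why the obstructions of Section~2 (which speak only about paths and covers in $G[W]$) do not immediately deliver this lemma—the argument really uses the existence of the entire diamond together with the specific hamiltonian cycles forced by removing~$c$.
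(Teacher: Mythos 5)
Your argument is correct and coincides with the paper's own proof: both first invoke Lemma~\ref{lem:triangle_obstr} to exclude $\deg(a)=3$, then assume $\deg(a)=4$, take a hamiltonian cycle of $G-c$, observe it must use $ab$ or $ad$, and splice $c$ in through that edge to produce a hamiltonian cycle of $G$. The symmetry remark handling $c$ is also exactly how the paper disposes of the second vertex.
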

\begin{proof}
It follows from Lemma~\ref{lem:triangle_obstr} that $a$ is not cubic. Let $a$ have degree~4. Since $G$ is hypocyclic, $G - c$ contains a hamiltonian cycle ${\mathfrak h}$. ${\mathfrak h}$ must contain $ab$ or $ad$ (possibly both), say $ab$. But then $({\mathfrak h} - ab) \cup acb$ is a hamiltonian cycle in $G$, a contradiction.
\end{proof}

Note that in Proposition~\ref{prop:diamond_obstr}, the edge $bd$ may or may not be present in the graph. We have already mentioned that hypohamiltonian graphs are cyclically 4-connected. We can strengthen this in the following way.

\begin{lemma}\label{lem:edge-cut}
One of the two components obtained when deleting a $3$-edge-cut from a hypohamiltonian graph must be $K_1$.
\end{lemma}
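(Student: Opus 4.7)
My plan is to argue by contradiction: assume the 3-edge-cut $F = \{e_1, e_2, e_3\}$ separates $G$ into components $A$ and $B$ with $|A|, |B| \geq 2$, and show that $G$ would then have to be hamiltonian, contradicting the definition of hypohamiltonicity. Write $e_i = a_i b_i$ with $a_i \in A$ and $b_i \in B$. The main tool throughout is a simple parity observation: for any $v \in V(G)$, the Hamilton cycle of $G - v$ (guaranteed by hypohamiltonicity) uses an even number of edges from $F_v := \{e_i \in F : v \notin e_i\}$, and it is forced to use at least two of them, because the cycle spans the two non-empty parts of $G - v$ lying on opposite sides of the cut.

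First I would show that $a_1, a_2, a_3$ are pairwise distinct, and likewise $b_1, b_2, b_3$. If some vertex $v$ were incident to two or more edges of $F$, then $|F_v| \leq 1$, but no Hamilton cycle of $G - v$ can cross the cut exactly once (odd) or exactly zero times (since both parts remain non-empty in $G - v$), yielding a contradiction.

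Next, since on each side the three endpoints are distinct, I would apply hypohamiltonicity to $G - b_1$: here $F_{b_1} = \{e_2, e_3\}$, so the Hamilton cycle of $G - b_1$ must use both $e_2$ and $e_3$. Deleting these two edges splits the cycle into a Hamilton path of $G[A]$ from $a_2$ to $a_3$ (passing through $a_1$) together with a Hamilton path of $G[B \setminus \{b_1\}]$ from $b_2$ to $b_3$. Applying the same reasoning to $G - a_1$ yields a Hamilton path of $G[B]$ from $b_2$ to $b_3$ (passing through $b_1$). Concatenating the Hamilton path of $G[A]$ from $a_2$ to $a_3$, the edge $e_3$, the Hamilton path of $G[B]$ from $b_3$ to $b_2$, and the edge $e_2$ produces a Hamilton cycle of $G$, contradicting its non-hamiltonicity.

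The only delicate point is the bookkeeping in the first step when several of the $a_i$'s or $b_i$'s might coincide, but the uniform "an odd number of crossings is impossible" argument covers all sub-cases at once. After that, everything is a straightforward assembly: the parity argument pins down exactly which pair of cut edges each relevant Hamilton cycle must use, and the two sides of the cut then snap together into a Hamilton cycle of the whole graph.
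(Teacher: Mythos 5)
Your proof is correct and follows essentially the same route as the paper: delete one endpoint of a cut edge from each side, extract the forced Hamilton paths spanning $A$ and $B$ between the remaining cut-edge endpoints, and glue them into a Hamilton cycle of $G$. The only cosmetic difference is that you establish distinctness of the endpoints via the parity-of-crossings argument, whereas the paper deduces it directly from $3$-connectivity of hypohamiltonian graphs.
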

\begin{proof}
Consider a 3-edge-cut $C$ in a hypohamiltonian graph $G$. $G - C$ has two components $A$ and $B$ with $|V(A)| \le |V(B)|$. We put $C = \{ a_1b_1, a_2b_2, a_3b_3 \}$, where $a_i \in V(A)$ and $b_i \in V(B)$. Assume $A \ne K_1$. In this situation, since $G$ is 3-connected, the elements of the set $\{ a_1, a_2, a_3, b_1, b_2, b_3 \}$ are pairwise distinct, as otherwise we would have a 2-cut.

Since $G$ is hypohamiltonian, $G - b_3$ is hamiltonian, so there is a hamiltonian path ${\mathfrak p}_A$ in $A$ with end-vertices $a_1$ and $a_2$. As $G - a_3$ is hamiltonian, there is a hamiltonian path ${\mathfrak p}_B$ in $B$ with end-vertices $b_1$ and $b_2$. Now ${\mathfrak p}_A \cup {\mathfrak p}_B + a_1b_1 + a_2b_2$ is a hamiltonian cycle in $G$, a contradiction.
\end{proof}

\begin{proposition}\label{prop:vertex-cut}
Let $G$ be a hypohamiltonian graph containing a $3$-cut $M = \{ u,v,w \}$.
\begin{enumerate}[label=(\roman*)]
\item We have $uv,vw,wu \notin E(G)$.
\item If $M$ is not the neighbourhood of a vertex, then $\max_{x \in M} \deg(x) \ge 4$.
\end{enumerate}
\end{proposition}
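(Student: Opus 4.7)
My plan is to handle both parts by exploiting the three hamiltonian cycles $H_x$ of $G-x$ (for $x\in M$), combined with Lemma~\ref{lem:edge-cut}. Before starting I would record two preliminary facts: every $x \in M$ must have at least one neighbour in each component of $G-M$ (otherwise $M\setminus\{x\}$ would be a $2$-cut of $G$, violating $3$-connectivity), and $G-M$ has exactly two components $A$ and $B$ (else for any $x\in M$ the $2$-cut $M\setminus\{x\}$ in $G-x$ would separate three or more parts, which no hamiltonian cycle of $G-x$ could accommodate since a cycle crosses a $2$-cut at most twice).

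For (i) I would suppose $uv\in E(G)$ and construct a hamiltonian cycle of $G$ to get a contradiction. The cycle $H_u$ decomposes at the $2$-cut $\{v,w\}$ of $G-u$ into a hamiltonian path $P_A$ of $G[A\cup\{v,w\}]$ from $v$ to $w$ and its $B$-counterpart. Similarly $H_v$ yields a hamiltonian path $Q_B$ of $G[B\cup\{u,w\}]$ from $u$ to $w$. Then the closed walk
\[
u \longrightarrow v \longrightarrow P_A \longrightarrow w \longrightarrow Q_B^{\mathrm{rev}} \longrightarrow u
\]
uses only edges of $G$ (the first being the assumed edge $uv$) and visits every vertex exactly once, contradicting the non-hamiltonicity of $G$. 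The cases $vw, wu \in E(G)$ then follow by relabelling $u,v,w$.

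For (ii) I would prove the contrapositive: if $\deg(u)=\deg(v)=\deg(w)=3$, then $M=N(x)$ for some vertex $x$. By (i) the set $M$ is independent, so by the preliminary observation each $x \in M$ has its three neighbours in $A \cup B$ with at least one on each side; hence $\{n_A(x),n_B(x)\}=\{1,2\}$. Let $e_x$ be the unique edge joining $x$ to its \emph{minority} side. I would then show that $\{e_u,e_v,e_w\}$ forms a $3$-edge-cut: after removing these edges and partitioning $M$ into $L$ (vertices whose $A$-edge was deleted) and $R$ (vertices whose $B$-edge was deleted), no edge remains between $A \cup R$ and $B \cup L$, since there are no $A$--$B$ edges in $G$, no edges inside $M$ (by (i)), and the retained $M$-edges lie on one side by construction. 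Lemma~\ref{lem:edge-cut} then forces one of these sides to be a $K_1$, which since each side contains at least one vertex of $M$ can only happen if $|A|=1$ or $|B|=1$; the singleton vertex $x$ of that component has $\deg(x)\ge 3$ with all neighbours in $M$, hence $N(x)=M$.

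The key insight is in (ii): mimicking (i) and trying to fabricate a hamiltonian cycle of $G$ directly leads to messy case analysis on how the three arcs between $u,v,w$ distribute between $A$ and $B$, but the minority-edge $3$-edge-cut collapses the whole matter to one invocation of Lemma~\ref{lem:edge-cut}.
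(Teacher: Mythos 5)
Your proof is correct and follows essentially the same route as the paper: part (i) is the paper's construction of a hamiltonian cycle obtained by gluing the two hamiltonian paths extracted from the cycles in $G-u$ and $G-v$ along the assumed edge $uv$ (the paper separately dispatches the case $A=K_1$ via Bondy's triangle lemma, but your uniform argument covers that case as well), and part (ii) rests on the same Lemma~\ref{lem:edge-cut}. The only difference is one of detail: where the paper merely asserts that (ii) ``follows directly'' from that lemma, you explicitly exhibit the required $3$-edge-cut via the minority edges --- a welcome elaboration; just note that your clause ``each side contains at least one vertex of $M$'' can fail (a side may be all of $V(A)$ with $R=\emptyset$), and the correct justification is that each side contains all of $V(A)$ resp.\ $V(B)$, which is what forces $|A|=1$ or $|B|=1$.
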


\begin{proof}
(i) Note that (i) was also already shown by Thomassen in~\cite{Th76}, but here we give an alternative proof. Assume that $uv \in E(G)$. Since $G$ is hypohamiltonian, there exists a hamiltonian cycle ${\mathfrak h}$ in $G - u$. Let $A$ and $B$ be the components of $G - M$ (we leave to the reader the easy proof that there are exactly two components in $G - M$) and put ${\mathfrak p}_A = {\mathfrak h} \cap G[V(A) \cup M]$.

Case 1: $A \ne K_1$ and $B \ne K_1$. Since $M$ is a 3-cut, ${\mathfrak p}_A$ has end-vertices $v$ and $w$. Analogously there exists a hamiltonian path ${\mathfrak p}_B$ in $G[V(B) \cup M]$ with end-vertices $u$ and $w$. Now ${\mathfrak p}_A \cup {\mathfrak p}_B + uv$ is a hamiltonian cycle in $G$, a contradiction.

Case 2: $A = K_1$. We have $V(A) = \{ a \}$, so $M = N(a)$. Now $auv$ is a triangle containing the cubic vertex $a$, in contradiction to Lemma~\ref{lem:triangle_obstr}.

(ii) follows directly from Lemma~\ref{lem:edge-cut}. Note that the neighbourhood condition is necessary, since cubic hypohamiltonian graphs---such as the Petersen graph---do exist.
\end{proof}

\begin{corollary}
In a cubic hypohamiltonian graph, every $3$-cut must be the neighbourhood of a vertex.
\end{corollary}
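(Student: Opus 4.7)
The plan is to derive this as an immediate contrapositive application of Proposition~\ref{prop:vertex-cut}(ii). Let $G$ be a cubic hypohamiltonian graph and let $M = \{u,v,w\}$ be any $3$-cut of~$G$. Since $G$ is cubic, every vertex has degree exactly~$3$, so in particular $\max_{x \in M} \deg(x) = 3 < 4$.

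Proposition~\ref{prop:vertex-cut}(ii) states that whenever $M$ is not the neighbourhood of a vertex, we must have $\max_{x \in M} \deg(x) \ge 4$. Taking the contrapositive and combining it with the observation above forces $M$ to be the neighbourhood of some vertex $a \in V(G)$, which is precisely the claim. No genuine obstacle arises here; the corollary is essentially a restatement of part~(ii) of the preceding proposition in the cubic setting, where the degree hypothesis of that implication is automatically violated for every $3$-cut. In particular, the case analysis (Case~1, Case~2 and the appeals to Lemmas~\ref{lem:triangle_obstr} and~\ref{lem:edge-cut}) has already been discharged in the proof of Proposition~\ref{prop:vertex-cut}, so nothing further needs to be verified.
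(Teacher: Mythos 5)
Your proof is correct and matches the paper's intent exactly: the corollary appears with no separate proof precisely because it is the contrapositive of Proposition~\ref{prop:vertex-cut}(ii) applied in the cubic setting, where $\max_{x \in M}\deg(x) = 3 < 4$ for every $3$-cut $M$. Nothing further is needed.
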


\subsection{The enumeration algorithm}

The pseudocode of the enumeration algorithm ${\mathfrak A}$ is given in Algorithm~\ref{algo:init-algo} and Algorithm~\ref{algo:construct}.

In order to generate all hypohamiltonian graphs with $n$ vertices we start from a graph $G$ which consists of an $(n-1)$-cycle and an isolated vertex $h$ (disjoint from the cycle), so $G - h$ is hamiltonian. Both in Algorithm~\ref{algo:init-algo} and Algorithm~\ref{algo:construct} we only add edges between existing vertices of the graph. So if a graph is hamiltonian, all graphs obtained from it will also be hamiltonian. Thus we can prune the search when a hamiltonian graph is constructed (cf.\ line~\ref{line:hamiltonian} of Algorithm~\ref{algo:construct}).

In Algorithm~\ref{algo:init-algo} we connect $h$ to $D$ vertices of the $(n-1)$-cycle in all possible ways and then perform Algorithm~\ref{algo:construct} on these graphs which will continue to recursively add edges without increasing the maximum degree of the graph.

It is essential for the efficiency of the algorithm that as few as possible edges are added (i.e.\ that as few as possible graphs are constructed), while still guaranteeing that all hypohamiltonian graphs are found by the algorithm. If a generated graph contains an obstruction for hypohamiltonicity, it clearly cannot be hypohamiltonian and hence we only add edges which destroy (or work towards the destruction of) that obstruction.

In the following theorem we show that this algorithm indeed finds all hypohamiltonian graphs.

\begin{theorem}\label{lem:diamond-algo-correct}
If Algorithm~\ref{algo:init-algo} terminates, the list of graphs $\mathcal H$ outputted by the algorithm is the list of all hypohamiltonian graphs with $n$ vertices.
\end{theorem}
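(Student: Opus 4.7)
My plan is to show two things: termination, and completeness (every hypohamiltonian graph on $n$ vertices is output). Soundness is built into the algorithm by a final hypohamiltonicity test, so I shall take that as given. For termination, each recursive call in Algorithm~\ref{algo:construct} strictly increases the edge count while respecting the maximum-degree cap imposed in Algorithm~\ref{algo:init-algo} and pruning whenever a Hamiltonian graph is constructed. Since both $|E|$ and $\Delta$ on $n$ vertices are bounded, the recursion tree is finite.

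The bulk of the proof concerns completeness. Fix a hypohamiltonian graph $G$ on $n$ vertices. Because $G$ is hypocyclic, for any vertex $h \in V(G)$ the graph $G - h$ admits a Hamiltonian cycle $C$; choose any such $h$ and $C$. Then $C$ together with the isolated vertex $h$ is (up to isomorphism) one of the starting configurations produced in Algorithm~\ref{algo:init-algo}, and for $D = \deg_G(h)$ with $h$ attached to precisely its $G$-neighbours on $C$, Algorithm~\ref{algo:init-algo} produces a spanning subgraph $G_1 \subseteq G$. I then argue by induction on $|E(G) \setminus E(G')|$ that every spanning subgraph $G' \subseteq G$ reached in the search is either $G$ itself or else Algorithm~\ref{algo:construct} recurses on some $G' + e$ with $e \in E(G) \setminus E(G')$.

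To carry out the inductive step, I split on which pruning criterion would fire for $G'$. Since $G' \subsetneq G$ and $G$ is non-Hamiltonian, the Hamiltonicity check never discards $G'$. If $G'$ contains a type~A, B, or~C obstruction, then $G$ does not contain it (by Lemma~\ref{lem:typeA+B_obstr} or Lemma~\ref{lem:typeC_obstr}), so at least one edge of $E(G) \setminus E(G')$ must destroy it; the definitions of good $A$-, $B$-, and $C$-edges given in the paper are precisely those that can be involved in destroying the obstruction, and Algorithm~\ref{algo:construct} enumerates all such candidates. If instead $G'$ contains a triangle with a cubic vertex, a diamond whose central-edge endpoint has degree $\le 4$, a 3-edge-cut both of whose sides are nontrivial, or a 3-cut violating Proposition~\ref{prop:vertex-cut}, then the corresponding structural result (Lemma~\ref{lem:triangle_obstr}, Proposition~\ref{prop:diamond_obstr}, Lemma~\ref{lem:edge-cut}, or Proposition~\ref{prop:vertex-cut}) guarantees an edge of $G$ outside $G'$ whose addition works toward removing the obstruction, and again the algorithm considers it. The induction then reaches $G$, which passes the final test and enters $\mathcal{H}$.

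The main obstacle is the verification, case by case, that the algorithm's good-edge definitions are \emph{complete}: for every obstruction that $G'$ can present and that $G$ does not have, at least one of the edges bridging $G'$ to $G$ is flagged as a good edge. For types~B and~C this is immediate from the definitions, but for type~A the restriction that a good edge must have an endpoint of degree $\le 1$ in $G[W]$ is stronger than Aldred--McKay--Wormald's original formulation, so I need to argue carefully that at least one such edge survives in $E(G) \setminus E(G')$; the point is that since $G[W]$ is a disjoint union of paths and $G$ has no type~A obstruction on $(W,X)$, the paths must be joined up in $G$, and any such joining edge uses a path endpoint, which has degree $\le 1$ in $G'[W]$. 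A secondary but necessary detail is that Algorithm~\ref{algo:init-algo} iterates over all admissible values of $D$, since $\deg_G(h)$ is not known in advance; provided this is the case, the argument above applies to the correct starting configuration.
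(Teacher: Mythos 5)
Your overall strategy coincides with the paper's: fix a hypohamiltonian $G$, exhibit a starting configuration of Algorithm~\ref{algo:init-algo} that is a spanning subgraph of $G$, and then induct on the number of missing edges, using Lemmas~\ref{lem:typeA+B_obstr} and~\ref{lem:typeC_obstr} to guarantee that whenever a pruning criterion fires on an intermediate graph $G'$, at least one edge of $E(G)\setminus E(G')$ is among the candidates the algorithm tries. However, there is a genuine gap in how you set up the base case. You pick an \emph{arbitrary} vertex $h$ and run the argument with $D=\deg_G(h)$. Algorithm~\ref{algo:construct} only ever adds an edge $e$ with $\Delta(G'+e)=D$, i.e.\ the parameter $D$ is a hard cap on the maximum degree throughout the whole branch. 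If $\deg_G(h)<\Delta(G)$, then $G$ itself violates this cap, and the induction stalls: at some stage every remaining edge of $E(G)\setminus E(G')$ incident with a vertex that needs degree $\Delta(G)$ is filtered out by the condition $\Delta(G'+e)=D$, so no recursive call can reach $G$. The paper avoids this by taking the distinguished vertex to be one of \emph{maximum} degree: $G_0$ consists of an $(n-1)$-cycle together with a vertex joined to $\Delta(G)$ of its vertices, so the relevant run is the one with $D=\Delta(G)$ and the degree cap is never an obstruction. Your closing remark that Algorithm~\ref{algo:init-algo} iterates over all $D$ does not repair this, because the run you analyse ($D=\deg_G(h)$ for your arbitrary $h$) is the wrong one; the fix is simply to choose $h$ with $\deg_G(h)=\Delta(G)$.

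Two smaller points. First, you list pruning criteria that Algorithm~\ref{algo:construct} does not actually contain: the diamond condition (Proposition~\ref{prop:diamond_obstr}), the $3$-edge-cut condition (Lemma~\ref{lem:edge-cut}), and the $3$-cut condition (Proposition~\ref{prop:vertex-cut}) are proved in the paper but do not appear in the pseudocode, whose tests are: hamiltonicity, type~A, degree-$2$ vertex, type~C, cubic vertex in a triangle, and type~B. Arguing completeness for criteria the algorithm does not apply is harmless but signals a misreading; what matters is to cover exactly the branches that are present. Second, termination is a hypothesis of the theorem (``If Algorithm~\ref{algo:init-algo} terminates\ldots''), so your termination argument, while correct in spirit, is not required. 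Your justification of why a good $A$-edge must survive in $E(G)\setminus E(G')$---that merging two paths of $G'[W]$ into fewer covering paths forces an added edge to use an endpoint of degree at most one in $G'[W]$---is a welcome expansion of a step the paper only asserts, and is essentially the right argument.
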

\begin{proof}

It follows from line~\ref{line:hypohamiltonian} of Algorithm~\ref{algo:construct} that $\mathcal H$ only contains hypohamiltonian graphs. Now we will show that $\mathcal H$ indeed contains \emph{all} hypohamiltonian graphs with $n$~vertices.

Consider a hypohamiltonian graph $G$ with $n$ vertices. It follows from the definition of hypohamiltonicity that there is a spanning subgraph $G_0$ of $G$ which consists of an $(n-1)$-cycle $C$ and a vertex $v$ disjoint from $C$ which is connected to $\Delta(G)$ vertices of $C$. Since Algorithm~\ref{algo:init-algo} connects the vertex $h$ with $D$ vertices of an $(n-1)$-cycle in all possible ways, it will also construct a graph which is isomorphic to $G_0$.

We will now show by induction that Algorithm~\ref{algo:construct} produces a graph isomorphic to a spanning subgraph $G$ with $i$ edges for every $|E(G_0)| \le i \le |E(G)|$.

Assume this claim holds for some $i$ with $|E(G_0)| \le i \le |E(G)|-1$ and call the graph produced by Algorithm~\ref{algo:construct} which is isomorphic to a spanning subgraph of $G$ with $i$ edges $G'$.

Assume that $G'$ contains a type A obstruction $(W,X)$. By Lemma~\ref{lem:typeA+B_obstr}, $G$ does not contain a type A obstruction, so there is a good A-edge $e$ for $(W,X)$ in $E(G) \setminus E(G')$. It follows from line~\ref{line:destroy_typeA} of Algorithm~\ref{algo:construct} that Construct($G' + e, D$) is called and $G' + e$ will be accepted by the algorithm since $G$ is non-hamiltonian.

We omit the discussion of the cases where $G'$ contains a type B or C obstruction (i.e.\ lines~\ref{line:typeB_obstr} and~\ref{line:typeC_obstr}, respectively) as this is completely analogous.

So assume that $G'$ does not contain a type A obstruction, but contains a vertex $v$ of degree two (note that $G'$ cannot contain vertices of degree less than two). Since a hypohamiltonian graph has minimum degree 3, there is an edge $e \in E(G) \setminus E(G')$ which contains $v$ as an endpoint. It follows from line~\ref{line:destroy_deg2} of Algorithm~\ref{algo:construct} that Construct($G' + e, D$) is called.

The case where $G'$ contains a cubic vertex which is part of a triangle (i.e.\ line~\ref{line:deg3_triangle}) is completely analogous.

If none of the criteria is applicable, Algorithm~\ref{algo:construct} adds an edge $e$ to $G'$ in all possible ways (without increasing the maximum degree) and calls Construct($G'+e, D$) for each $e$. Since $|E(G')| < |E(G)|$, at least one of the graphs $G'+e$ will be a spanning subgraph of $G$ with $i+1$ edges.

\end{proof}

\begin{algorithm}[h]
\caption{Generate all hypohamiltonian graphs with $n$ vertices}
\label{algo:init-algo}
  \begin{algorithmic}[1]
	\STATE let $\mathcal H$ be an empty list
	\STATE let $G := C_{n-1} + h$
	\FORALL{$3 \le D \le n-1$}
		\STATE // Generate all hypohamiltonian graphs with $\Delta = D$
		\FOR{every way of connecting $h$ of $G$ with $D$ vertices of the $C_{n-1}$}
		\STATE Call the resulting graph $G'$
		\STATE Construct($G', D$) \ // i.e.\ perform Algorithm~\ref{algo:construct}
		\ENDFOR
	\ENDFOR
	\STATE Output $\mathcal H$
  \end{algorithmic}
\end{algorithm}

\begin{algorithm}[ht!]
\caption{Construct(Graph $G$, int $D$)}
\label{algo:construct}
  \begin{algorithmic}[1]
		\IF{$G$ is non-hamiltonian AND not generated before} \label{line:hamiltonian}
			\IF{$G$ contains a type A obstruction $(W,X)$}
				\FOR{every good A-edge $e \notin E(G)$ for $(W,X)$ for which $\Delta(G+e)=D$}
					\STATE Construct($G+e, D$) \label{line:destroy_typeA}
				\ENDFOR
			\ELSIF{$G$ contains a vertex $v$ of degree 2}
				\FOR{every edge $e \notin E(G)$ which contains $v$ as an endpoint for which $\Delta(G+e)=D$}
					\STATE Construct($G+e, D$) \label{line:destroy_deg2}
				\ENDFOR
			\ELSIF{$G$ contains a type C obstruction $(W,X,v)$} \label{line:typeC_obstr}
				\FOR{every good C-edge $e \notin E(G)$ for $(W,X,v)$ for which $\Delta(G+e)=D$}
					\STATE Construct($G+e, D$)
				\ENDFOR
			\ELSIF{$G$ contains a vertex $v$ of degree 3 which is part of a triangle} \label{line:deg3_triangle}
				\FOR{every edge $e \notin E(G)$ which contains $v$ as an endpoint for which \\\qquad $\Delta(G+e)=D$}
					\STATE Construct($G+e, D$)
				\ENDFOR			
			\ELSIF{$G$ contains a type B obstruction $(W,X)$} \label{line:typeB_obstr}
				\FOR{every good B-edge $e \notin E(G)$ for $(W,X)$ for which $\Delta(G+e)=D$}
					\STATE Construct($G+e, D$)
				\ENDFOR					
			\ELSE
				\IF{$G$ is hypohamiltonian} \label{line:hypohamiltonian}
					\STATE add $G$ to the list $\mathcal H$
				\ENDIF
					\FOR{every edge $e \notin E(G)$ for which $\Delta(G+e)=D$}
						\STATE Construct($G+e, D$)
					\ENDFOR	
			\ENDIF
		\ENDIF	
  \end{algorithmic}
\end{algorithm}

To make sure no isomorphic graphs are accepted, we use the program \textit{nauty}~\cite{nauty-website, mckay_14}. In principle more sophisticated isomorphism rejection techniques are known (such as the canonical construction path method~\cite{mckay_98}), but these methods are not compatible with the destruction of obstructions for hypohamiltonicity. Furthermore, isomorphism rejection is not a bottleneck in our implementation of this algorithm.

Also note that we only have to perform the hypohamiltonicity test (which can be computationally very expensive) if the graph does not contain any obstructions for hypohamiltonicity (cf.\ line~\ref{line:hypohamiltonian} of Algorithm~\ref{algo:construct}). Therefore, the hypohamiltonicity test is not a bottleneck in the algorithm.

Since our algorithm only adds edges and never removes any vertices or edges, all graphs obtained by the algorithm from a graph with a $g$-cycle will have a cycle of length at most~$g$. So in case we only want to generate hypohamiltonian graphs with a given lower bound $k$ on the girth, we can prune the construction when a graph with a cycle with length less than $k$ is constructed.

The order in which the bounding criteria of Algorithm~\ref{algo:construct} are tested is vital for the efficiency of the algorithm. By performing various extensive experiments, it turned out that the order in which the bounding criteria are listed in Algorithm~\ref{algo:construct} is the most efficient.

We also note that even though Aldred, McKay, and Wormald mentioned type C obstructions in their paper~\cite{AMW97}, they did not use them in their algorithm. However, our experimental results show that type C obstructions are significantly more helpful than e.g.\ type B obstructions.


\subsection{Results}
\label{sect:results_algo}

\subsubsection{The general case}

We implemented the algorithm ${\mathfrak A}$ in C and used it to generate all pairwise non-isomorphic hypohamiltonian graphs of a given order (with a given lower bound on the girth). Our implementation of this algorithm is called \textit{GenHypohamiltonian}, and can be downloaded from~\cite{genhypo-site}.

Table~\ref{table:number_of_hypoham_graphs} shows the counts of the complete lists hypohamiltonian graphs which were generated by our program. We generated all hypohamiltonian graphs up to 19 vertices and also went several steps further for hypohamiltonian graphs with a given lower bound on the girth. Recall that previously the complete lists of hypohamiltonian graphs were only known up to 17~vertices. For more information about the previous bounds and results, we refer to Table~\ref{table:bounds_hypoham_graphs} from Section~\ref{section:intro}.

In~\cite{AMW97} Aldred, McKay, and Wormald also produced a sample of 13~hypohamiltonian graphs with 18~vertices. It follows from our results that there are exactly 14 hypohamiltonian graphs with 18 vertices. These graphs are shown in Figure~\ref{fig:hypo_18}. The fourteenth graph which was not already known has girth~5 and is shown in Figure~\ref{fig:hypo_18}~(n). It has automorphism group size 36 and it has the largest group size among the hypohamiltonian graphs with 18 vertices. Using ${\mathfrak A}$, we also showed that there are exactly 34 hypohamiltonian graphs with 19 vertices. As can be seen from Table~\ref{table:number_of_hypoham_graphs}, all 34 of them have girth 5.

All graphs from Table~\ref{table:number_of_hypoham_graphs} can also be
downloaded from the \textit{House of Graphs}~\cite{hog} at
\url{http://hog.grinvin.org/Hypohamiltonian} and also be inspected in the database of interesting graphs by searching for the keywords ``hypohamiltonian * 2016".

\begin{table}
\centering
\small

	\begin{tabular}{|c || c | c | c | c | c | c |}
		\hline
		Order & \# hypoham. & $g \geq 4$ & $g \geq 5$ & $g \geq 6$ &  $g \geq 7$ & $g \geq 8$\\
		\hline
$0-9$  &  0  &  0  &  0  &  0  &  0  &  0  \\
10  &  1  &  1  &  1  &  0  &  0  &  0  \\
11  &  0  &  0  &  0  &  0  &  0  &  0  \\
12  &  0  &  0  &  0  &  0  &  0  &  0  \\
13  &  1  &  1  &  1  &  0  &  0  &  0  \\
14  &  0  &  0  &  0  &  0  &  0  &  0  \\
15  &  1  &  1  &  1  &  0  &  0  &  0  \\
16  &  4  &  4  &  4  &  0  &  0  &  0  \\
17  &  0  &  0  &  0  &  0  &  0  &  0  \\
18  &  14  &  13  &  8  &  0  &  0  &  0  \\
19  &  34  &  34  &  34  &  0  &  0  &  0  \\
20  &  ?  &  $\ge 98$  &  4  &  0  &  0  &  0  \\
21  &  ?  &  ?  &  85  &  0  &  0  &  0  \\
22  &  ?  &  ?  &  420  &  0  &  0  &  0  \\
23  &  ?  &  ?  &  85  &  0  &  0  &  0  \\
24  &  ?  &  ?  &  2 530  &  0  &  0  &  0  \\
25  &  ?  &  ?  &  ?  &  1  &  0  &  0  \\
26  &  ?  &  ?  &  ?  &  0  &  0  &  0  \\
27  &  ?  &  ?  &  ?  &  ?  &  0  &  0  \\
28  &  ?  &  ?  &  ?  &  $\ge 2$  &  1  &  0  \\
29  &  ?  &  ?  &  ?  &  ?  &  0  &  0  \\
30  &  ?  &  ?  &  ?  &  ?  &  0  &  0  \\
$31-35$  &  ?  &  ?  &  ?  &  ?  &  ?  &  0  \\
		
		\hline
	\end{tabular}

\caption{The number of hypohamiltonian graphs. The columns with a header of the form $g \geq k$ contain the number of hypohamiltonian graphs with girth at least~$k$. The counts of cases indicated with a '$\ge$' are possibly incomplete; all other cases are complete.}
\label{table:number_of_hypoham_graphs}

\end{table}

\begin{figure}[h!t]
    \centering
    \subfloat[]{\includegraphics[width=0.21\textwidth]{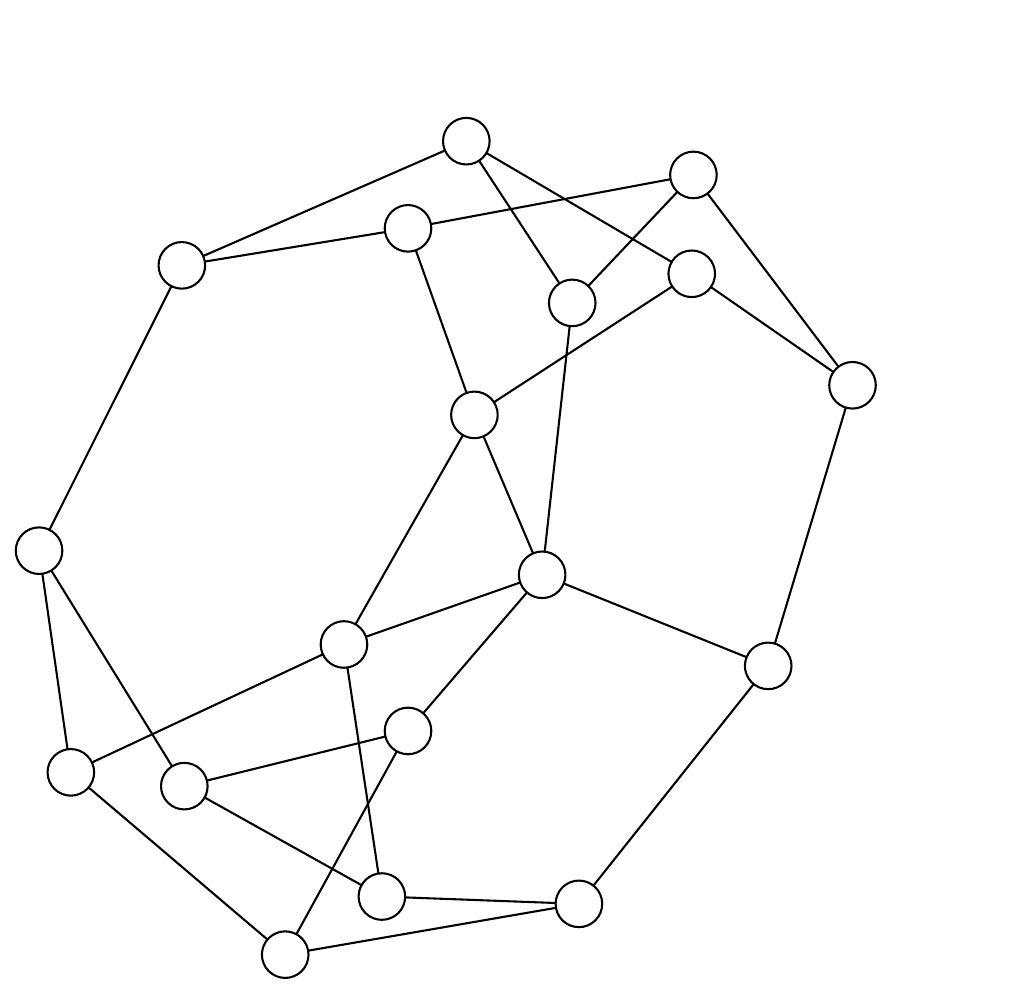}}
    \subfloat[]{\includegraphics[width=0.21\textwidth]{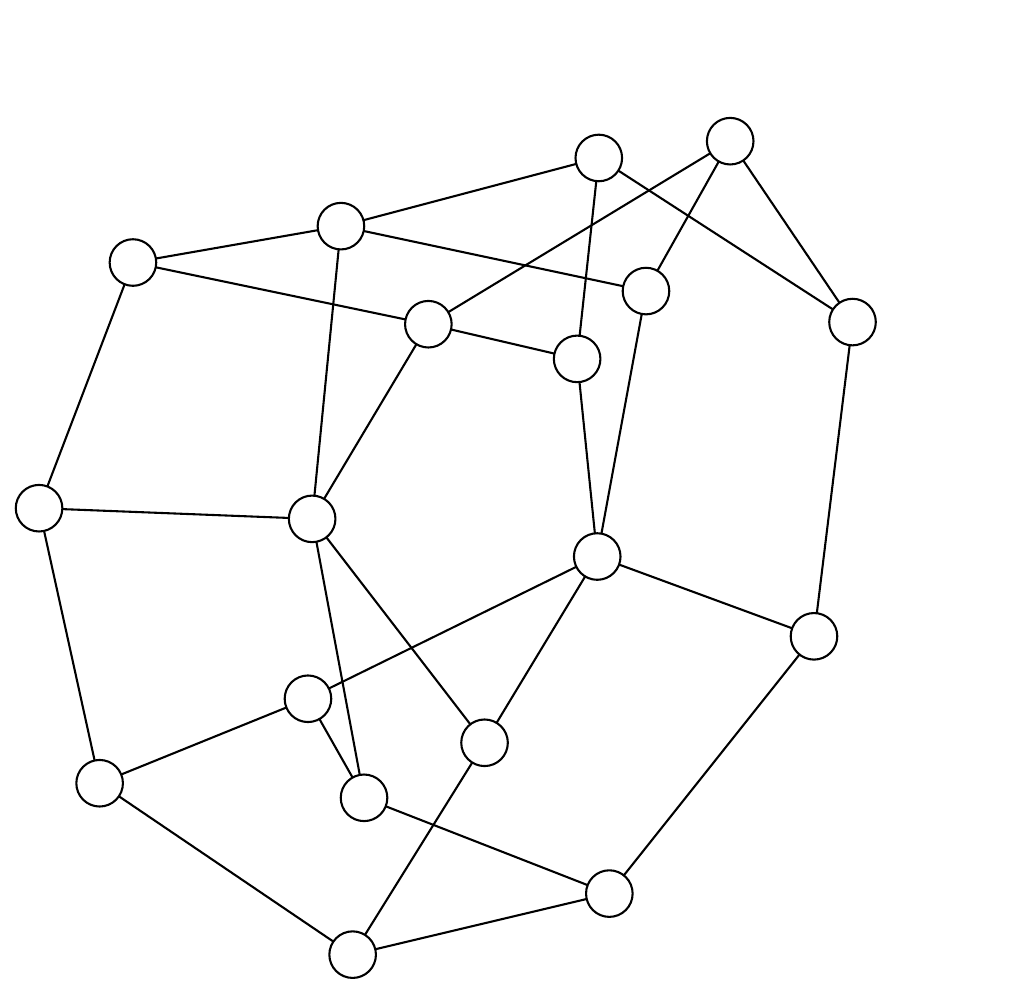}}
    \subfloat[]{\includegraphics[width=0.21\textwidth]{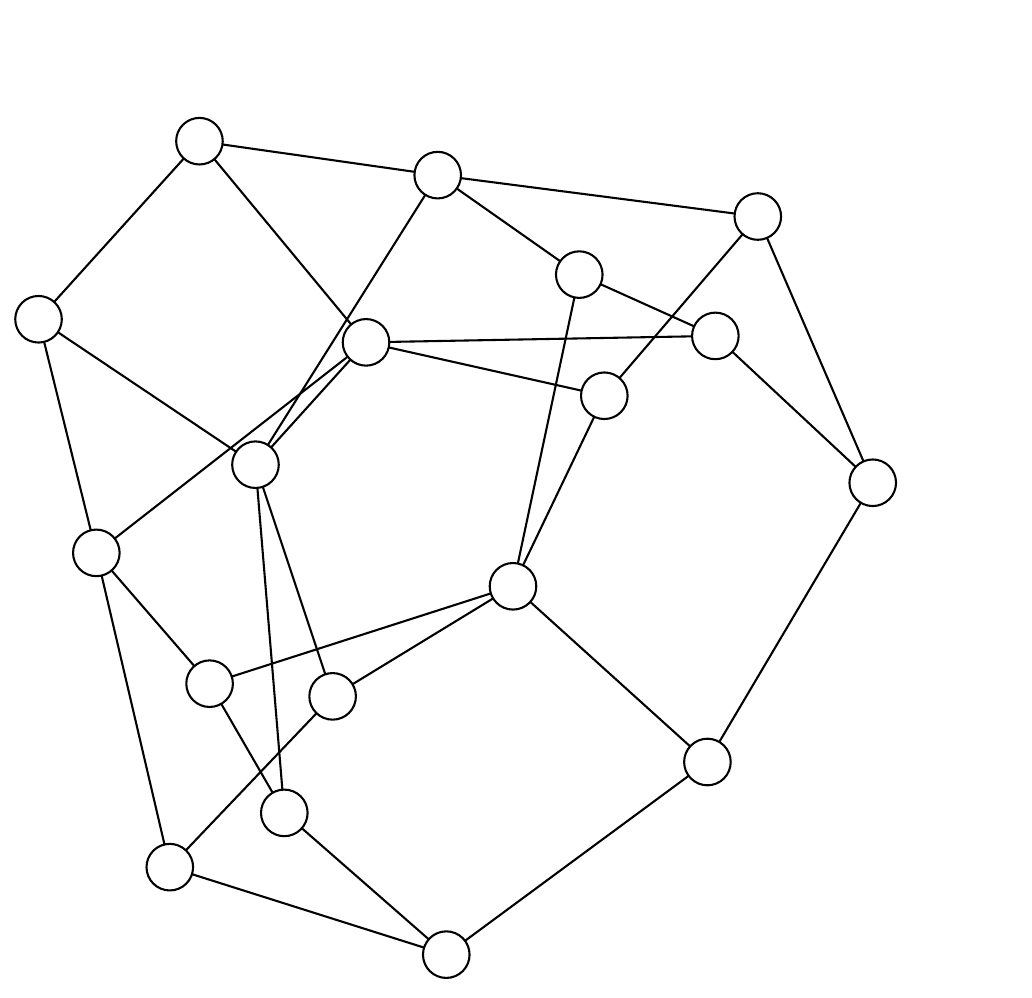}}
    \subfloat[]{\includegraphics[width=0.21\textwidth]{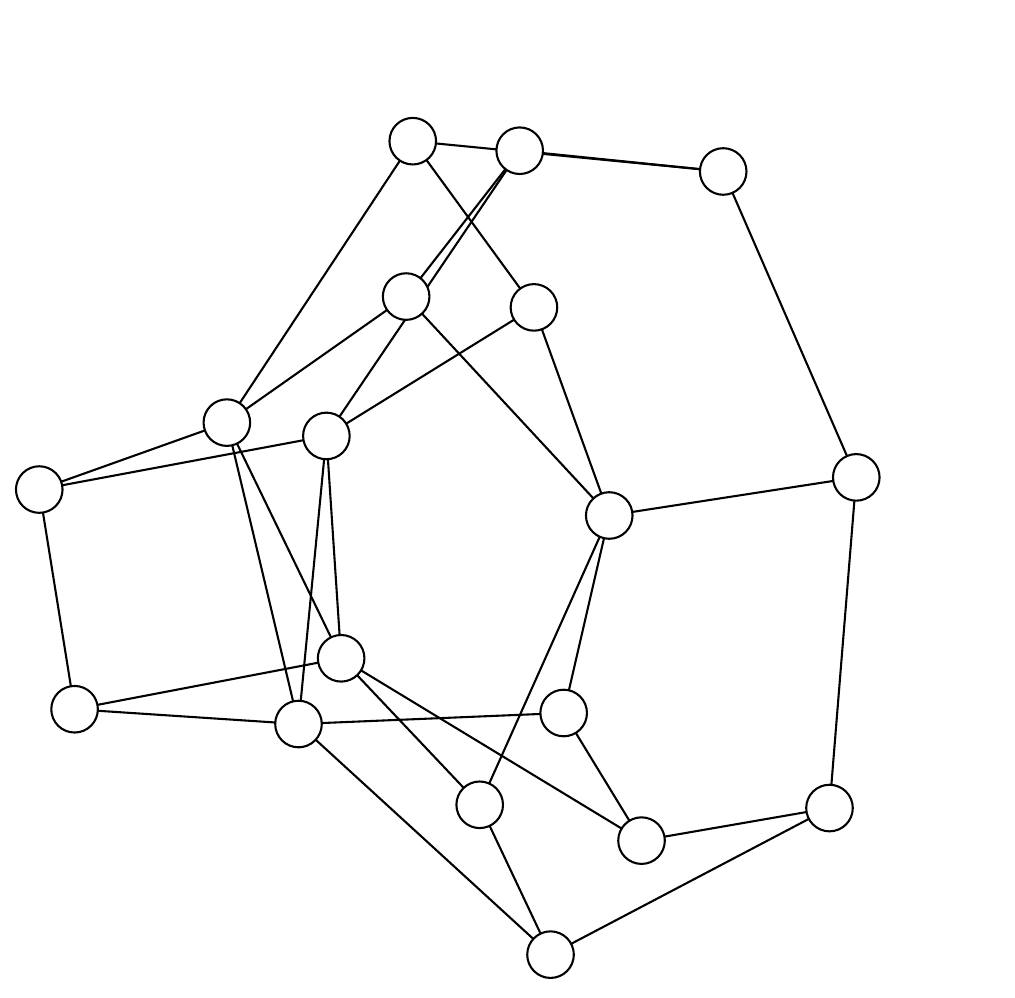}}
    \subfloat[]{\includegraphics[width=0.17\textwidth]{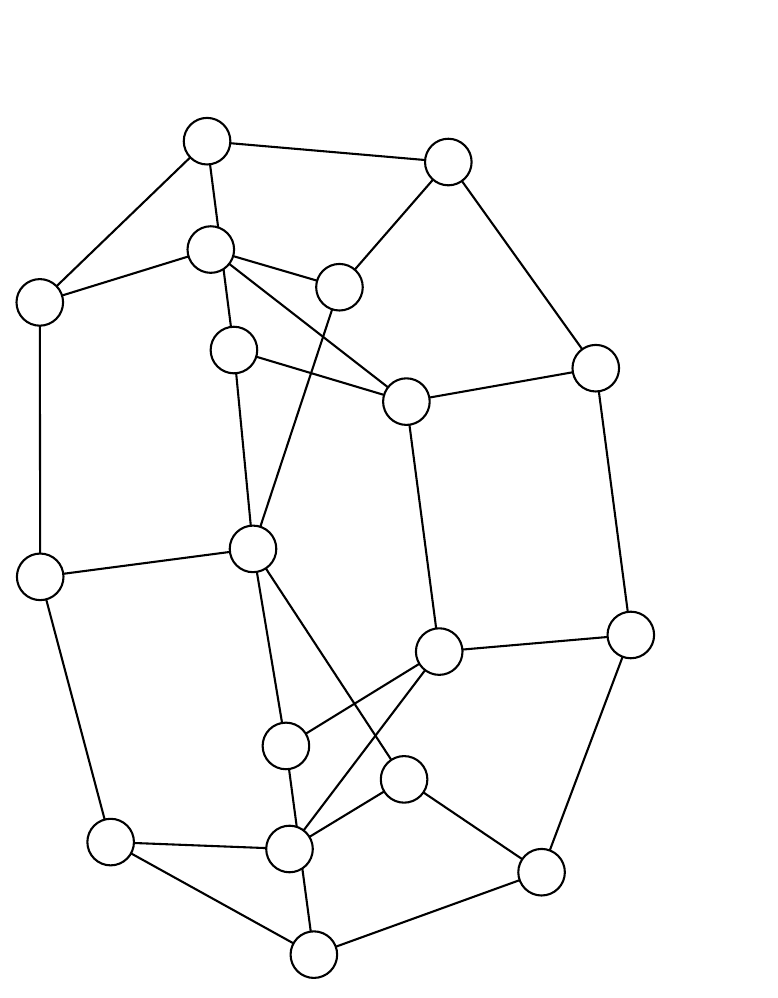}}\\
    \subfloat[]{\includegraphics[width=0.21\textwidth]{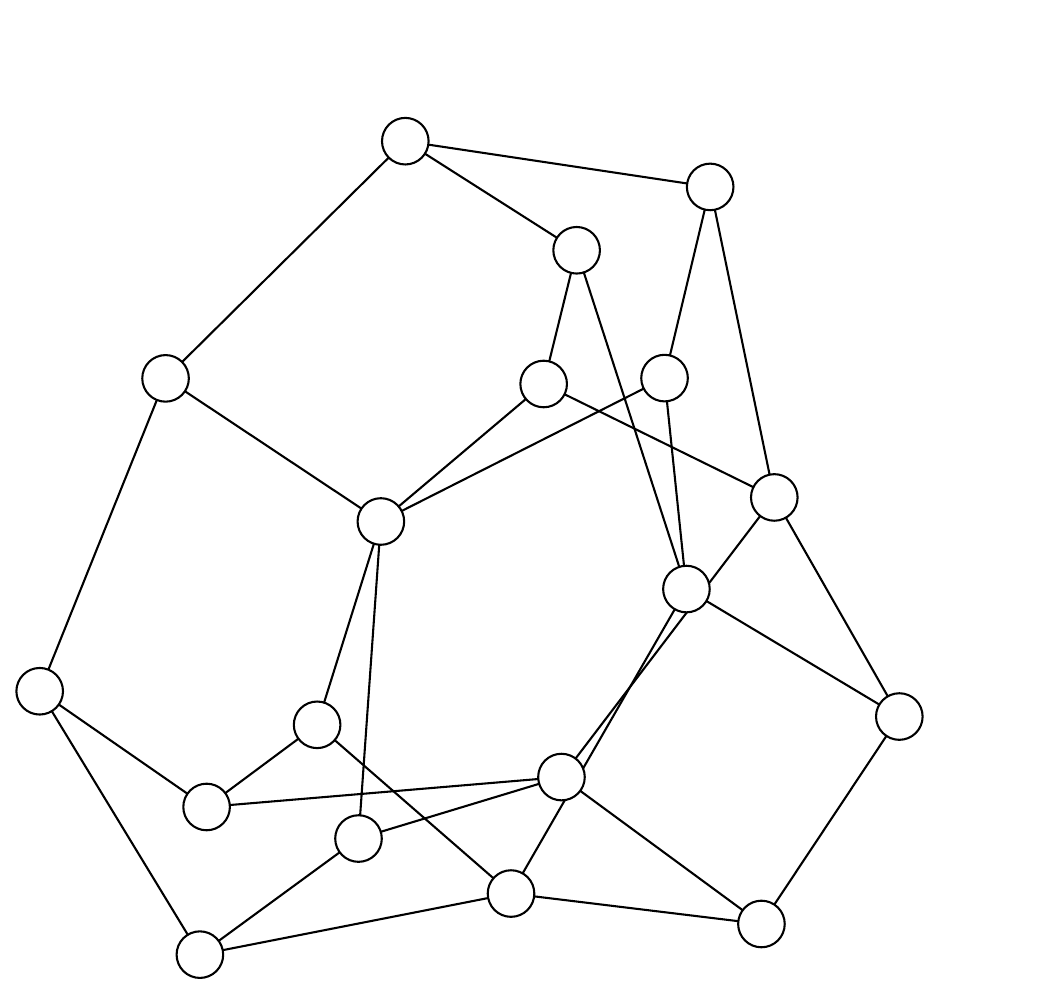}}
    \subfloat[]{\includegraphics[width=0.19\textwidth]{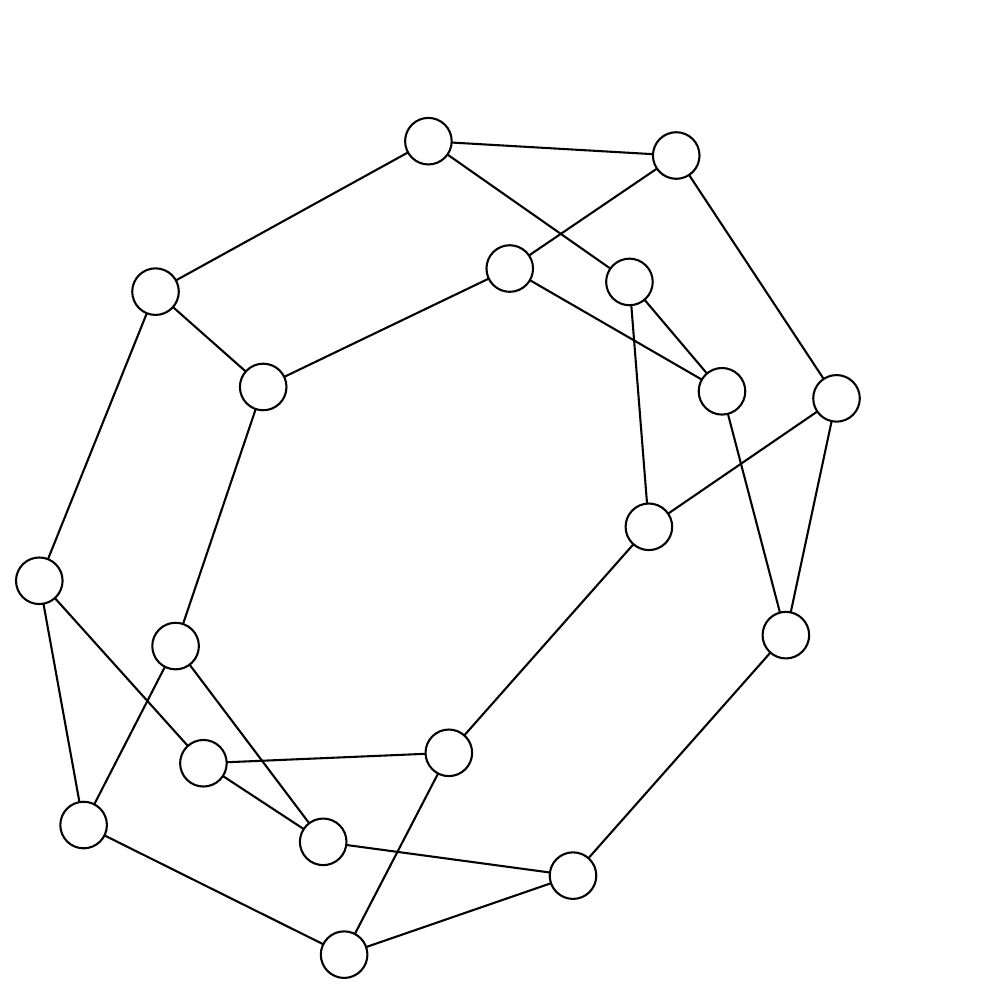}}
    \subfloat[]{\includegraphics[width=0.22\textwidth]{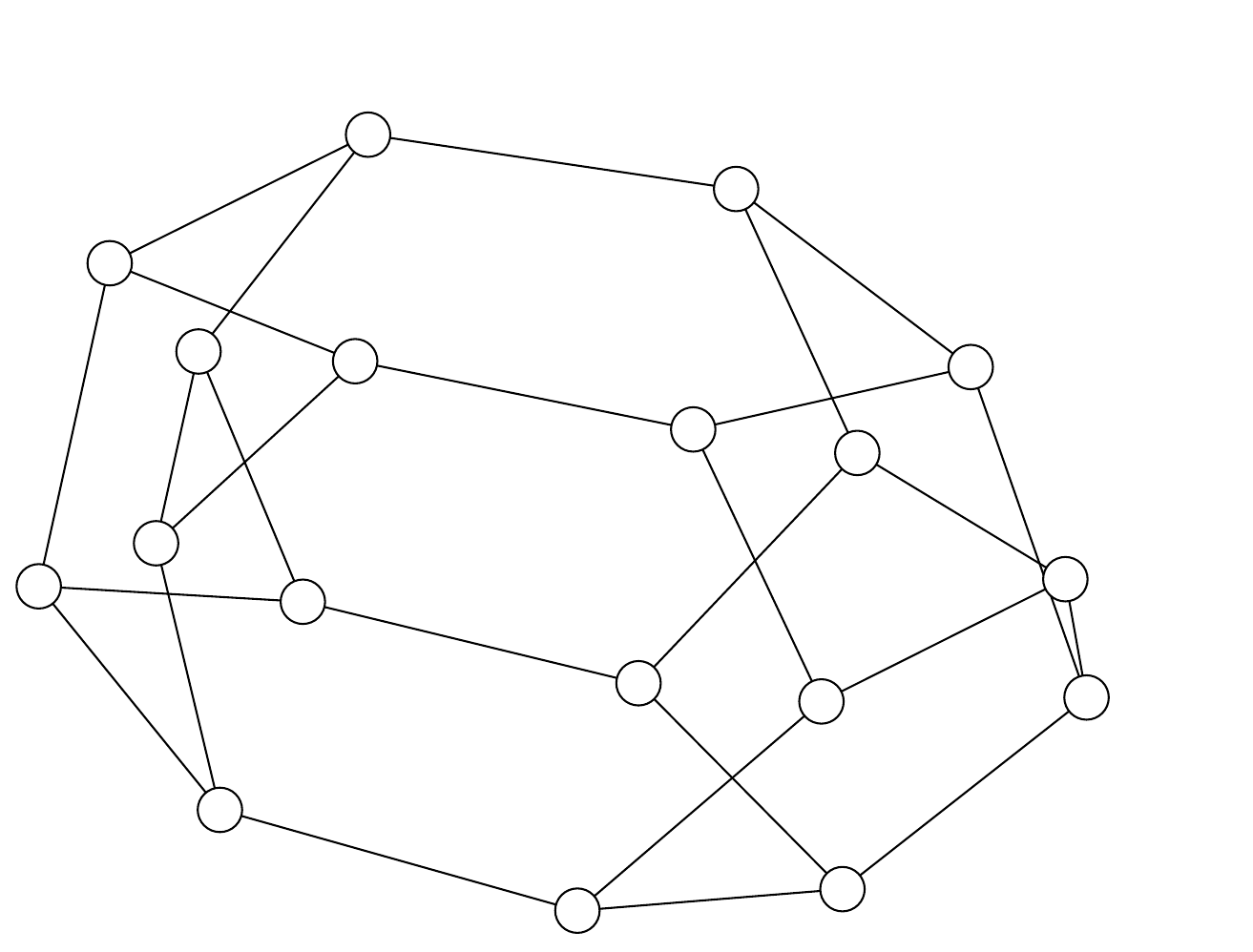}}
    \subfloat[]{\includegraphics[width=0.22\textwidth]{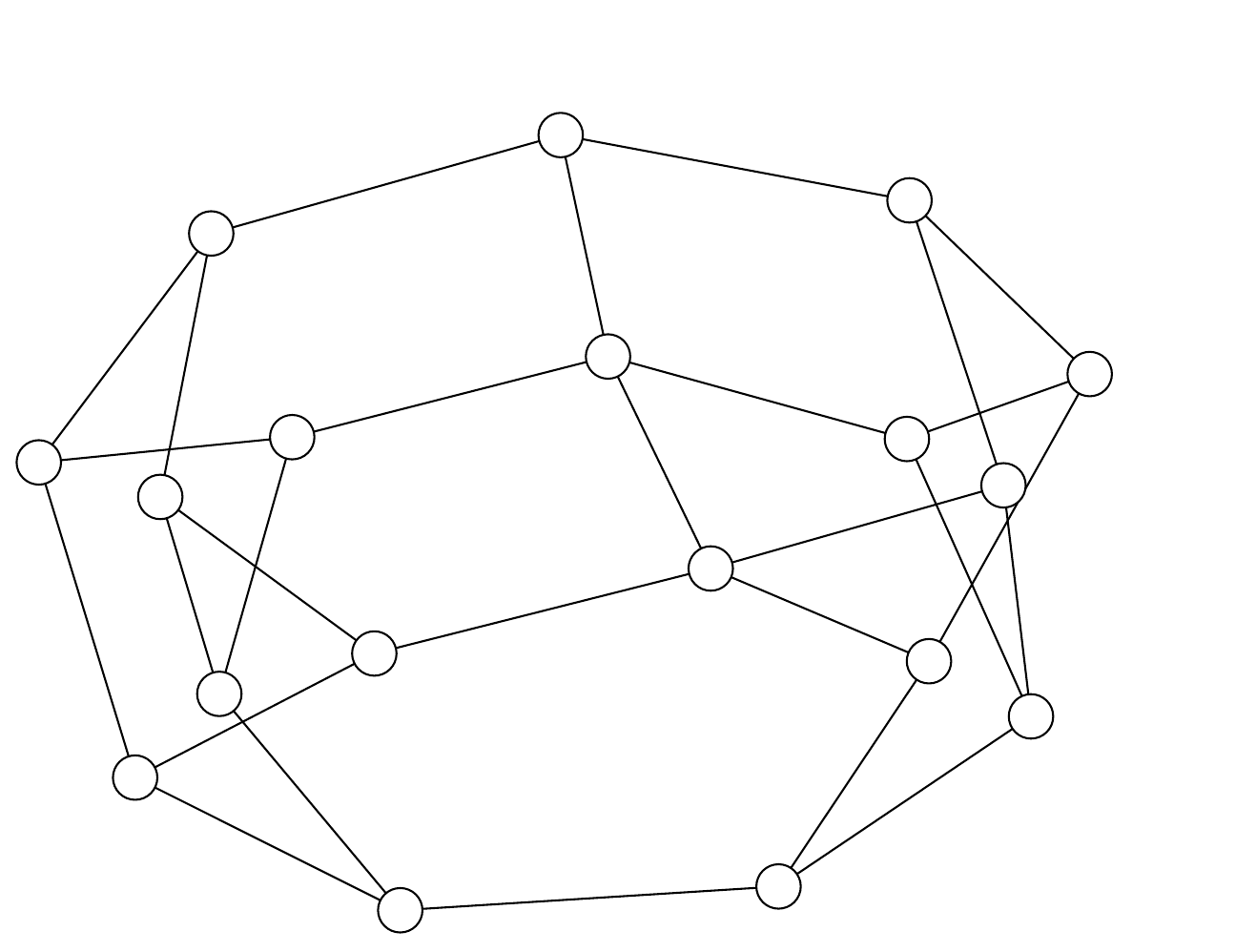}}
    \subfloat[]{\includegraphics[width=0.16\textwidth]{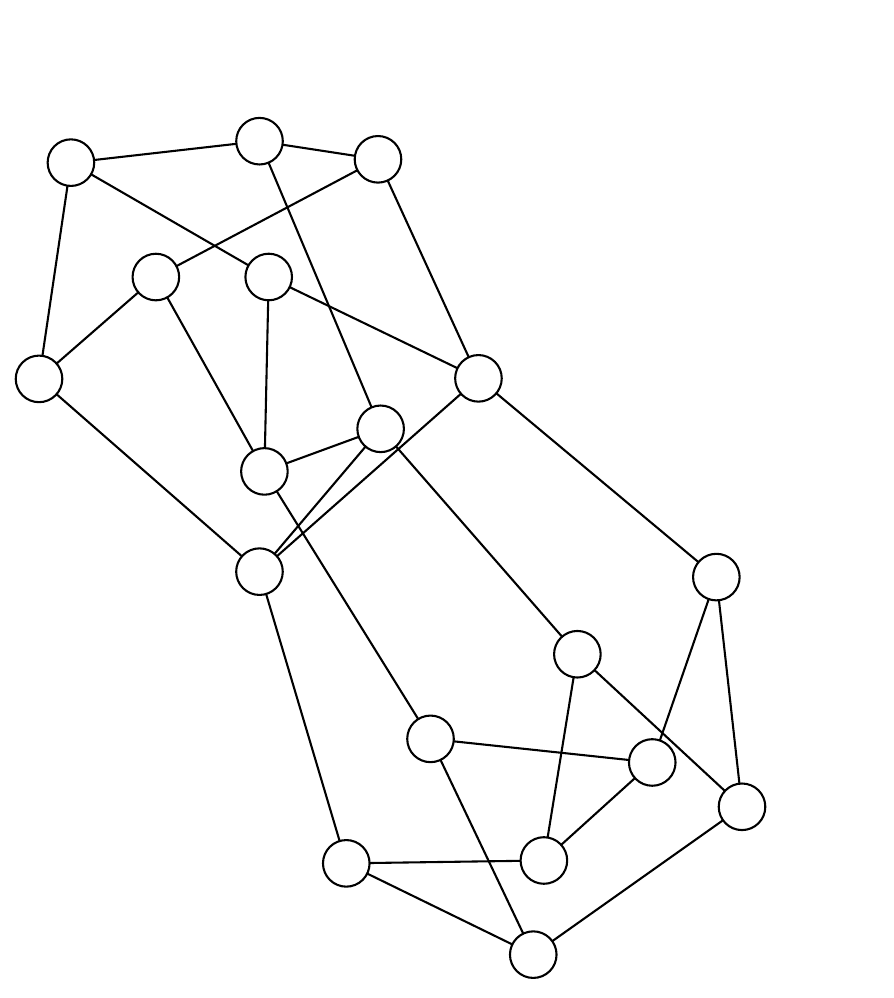}}\\
    \subfloat[]{\includegraphics[width=0.21\textwidth]{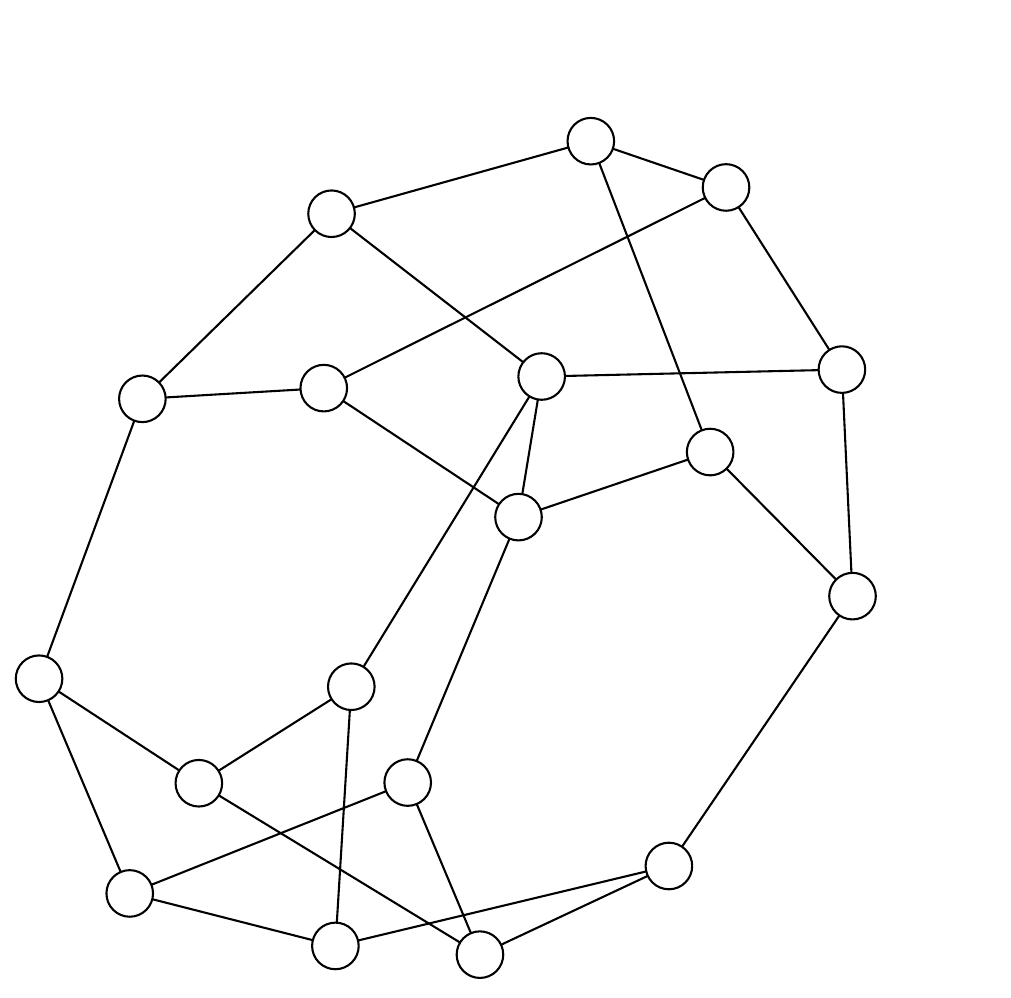}}
    \subfloat[]{\includegraphics[width=0.22\textwidth]{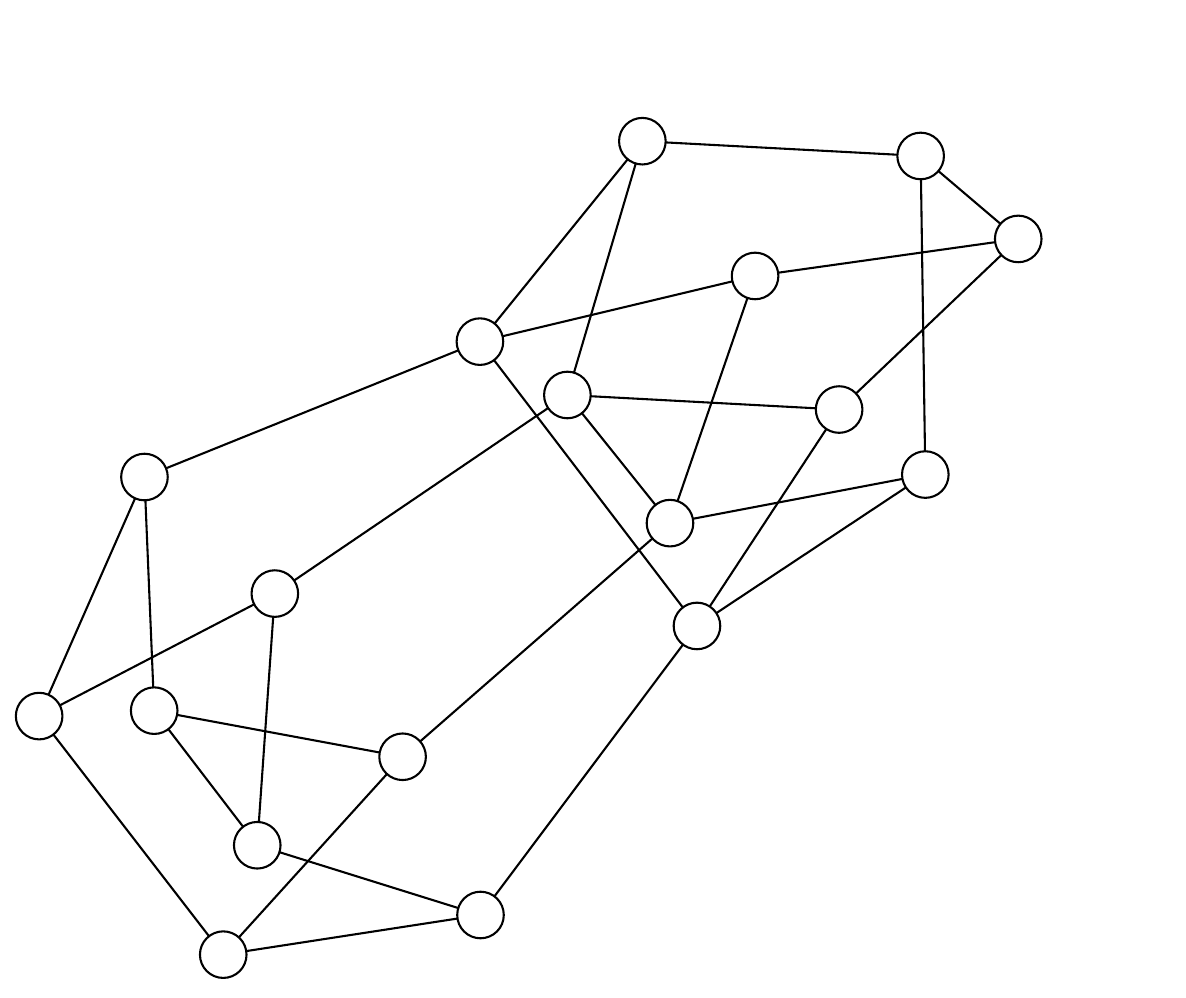}}
    \subfloat[]{\includegraphics[width=0.14\textwidth]{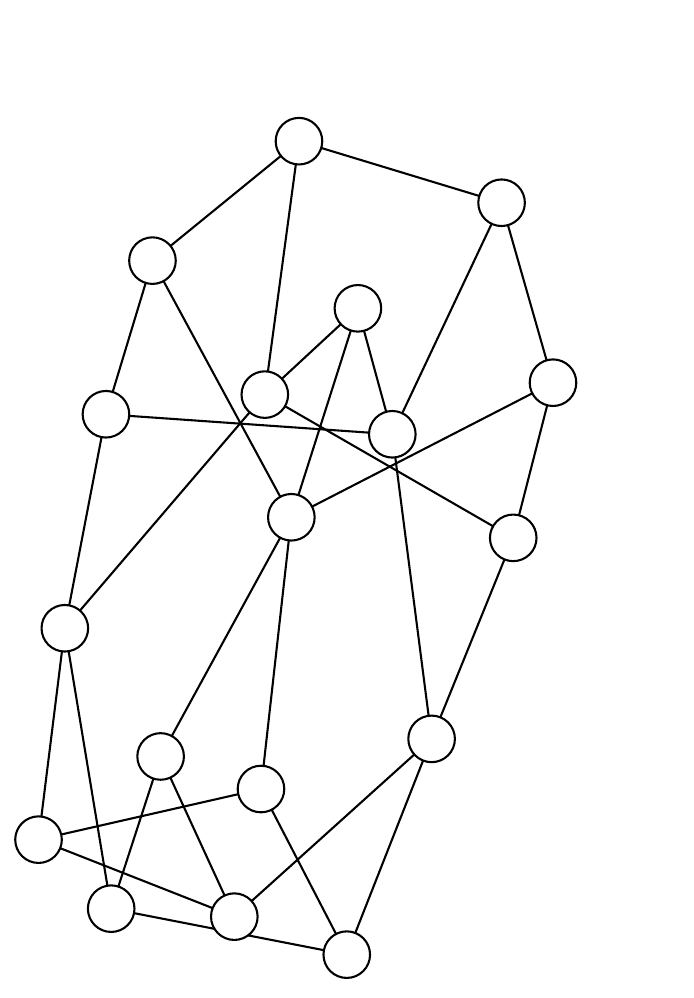}}
    \subfloat[]{\label{fig:hypo18-newgraph}\includegraphics[width=0.21\textwidth]{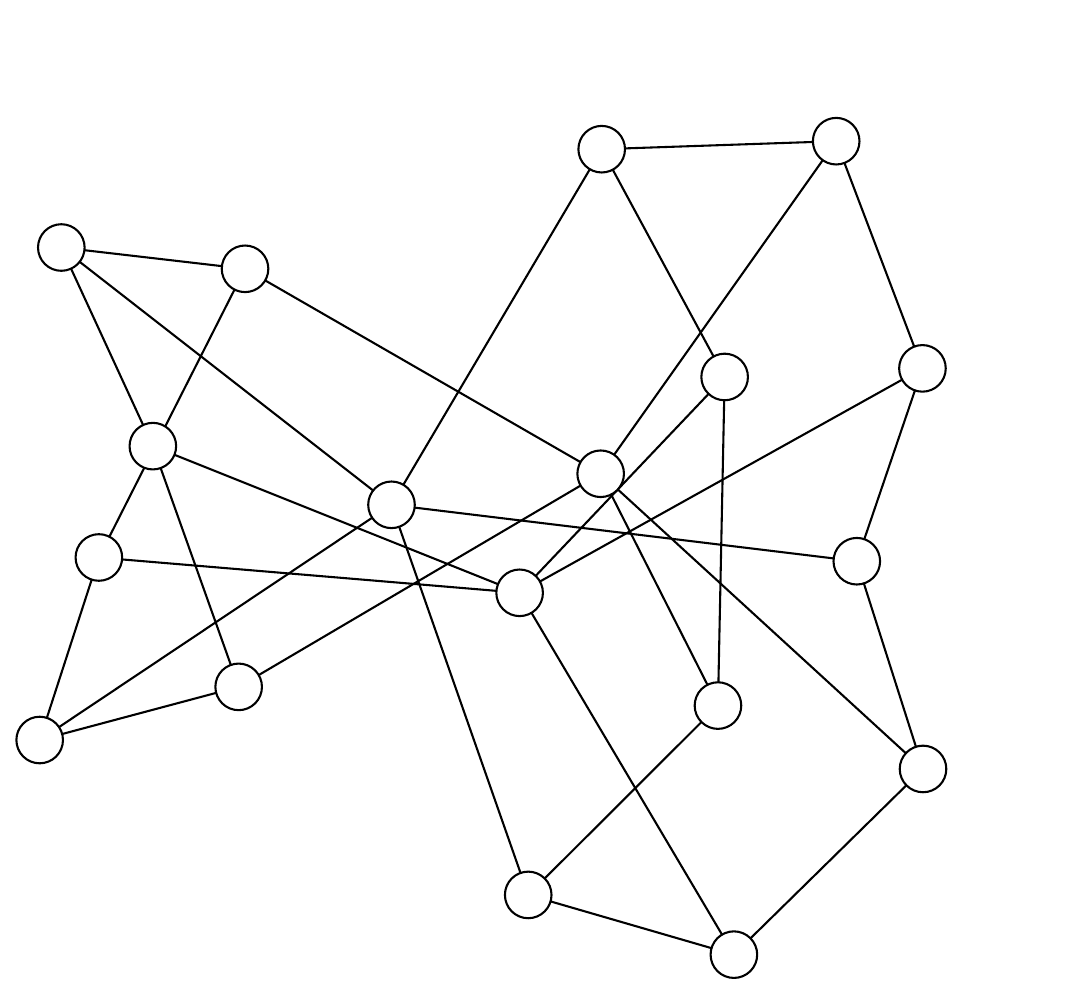}}
    \caption{All 14 hypohamiltonian graphs of order 18. Graph~(a) is the smallest hypohamiltonian graph of girth~3, while graphs (b)--(f) are the smallest hypohamiltonian graphs of girth~4.}
    \label{fig:hypo_18}
\end{figure}

Tables~\ref{table:times_hypo_g3}-\ref{table:times_hypo_g5} list the running times of the algorithm. The column \textit{``Max.\ nr.\ edges added''} denotes the maximum number of edges added by Algorithm~\ref{algo:construct} to a graph constructed by Algorithm~\ref{algo:init-algo} (i.e.\ the maximum number of recursive calls of \textit{Construct()}). 

The reported running times were obtained by executing our implementation of Algorithm~\ref{algo:init-algo} on an Intel Xeon CPU E5-2690 CPU at 2.90GHz. For the larger cases we did not include any running times in Tables~\ref{table:times_hypo_g3}-\ref{table:times_hypo_g5} since these were executed on a heterogeneous cluster and the parallelisation also caused a significant overhead. However in each case we went as far as computationally possible (most of the largest cases took between 1 and 10 CPU years).

Since the running times and number of intermediate graphs generated by the algorithm grows that fast, it seems very unlikely that these bounds can be improved in the near future using only faster computers.

Starting from girth at least~7, the bottleneck is the case where the generated graphs have maximum degree~3 (so here we are generating cubic hypohamiltonian graphs). (Also for girth 6, the cubic case forms a significant part of the total running time.) Algorithm~\ref{algo:init-algo} can also be used to generate only cubic hypohamiltonian graphs (and we also did this for correctness testing, see Section~\ref{sect:correctness_testing}). But here it is much more efficient to use a generator for cubic graphs with a given lower bound on the girth and testing if the generated graphs are hypohamiltonian as a filter. So for the generation of hypohamiltonian graphs with girth at least~6, we used Algorithm~\ref{algo:init-algo} only to construct hypohamiltonian graphs with maximum degree at least 4 and did the cubic case separately by using a generator for cubic graphs. More results on the cubic case can be found in Section~\ref{sect:cubic_case}.

Using Algorithm~\ref{algo:init-algo}, we have also determined the smallest hypohamiltonian graph of girth~6. It has 25~vertices and is shown in Figure~\ref{fig:hypo_25_g6}.

\begin{table}
\centering
\small
	\begin{tabular}{|c || c | r | r | c |}
		\hline
		Order & \# hypoham. & Time (s) & Increase & Max.\ nr.\ edges added\\
		\hline
16  &  4  &  9  &    &  15  \\
17  &  0  &  189  &  21.00  &  16  \\
18  &  14  &  18 339  &  97.03  &  18  \\
19  &  34 &    &    &    \\
		\hline
	\end{tabular}

\caption{Counts and generation times for hypohamiltonian graphs.}
\label{table:times_hypo_g3}

\end{table}

\begin{table}
\centering
\small
	\begin{tabular}{|c || c | r | r | c |}
		\hline
		Order & \# hypoham.\ $g \ge 4$ & Time (s) & Increase & Max.\ nr.\ edges added\\
		\hline
16  &  4  &  2  &    &  11  \\
17  &  0  &  19  &  9.50  &  12  \\
18  &  13  &  683  &  35.95  &  18  \\
19  &  34  &  10 816  &  15.84  &  19  \\
		\hline
	\end{tabular}

\caption{Counts and generation times for hypohamiltonian graphs with girth at least~4.}
\label{table:times_hypo_g4}

\end{table}

\begin{table}
\centering
\small
	\begin{tabular}{|c || c | r | r | c |}
		\hline
		Order & \# hypoham.\ $g \ge 5$ & Time (s) & Increase & Max.\ nr.\ edges added\\
		\hline
17  &  0  &  1  &    &  8  \\
18  &  8  &  9  &  9.00  &  9  \\
19  &  34  &  81  &  9.00  &  10  \\
20  &  4  &  1 125  &  13.89  &  11  \\
21  &  85  &  11 470  &  10.20  &  12  \\
22  &  420  &    &    &    \\
		\hline
	\end{tabular}

\caption{Counts and generation times for hypohamiltonian graphs with girth at least~5.}
\label{table:times_hypo_g5}

\end{table}

\begin{figure}[h!t]
	\centering
	\includegraphics[width=0.4\textwidth]{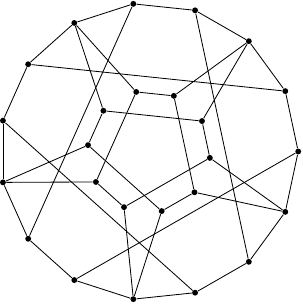}
	\caption{The smallest hypohamiltonian graph of girth~6. It has 25~vertices.}
	\label{fig:hypo_25_g6}
\end{figure}

\subsubsection{The cubic case}
\label{sect:cubic_case}

As already mentioned in the introduction, Aldred, McKay, and Wormald~\cite{AMW97} determined all cubic hypohamiltonian graphs up to 26~vertices and all cubic hypohamiltonian graphs with girth at least~5 and girth at least~6 on 28 and 30~vertices, respectively. In Table~\ref{table:counts_hypo_cubic} we extend these results. We used the program \textit{snarkhunter}~\cite{brinkmann_11,BG15} to generate all cubic graphs with girth at least $k$ for $4 \le k \le 7$, the program \textit{genreg}~\cite{meringer_99} for $k=8$ and the program of McKay et al.~\cite{mckay1998fast} for $k=9$. (Note that by Lemma~\ref{lem:triangle_obstr} cubic hypohamiltonian graphs must have girth at least~4.)

For girth at least $k$ for $k=7,8,9$ we obtained the following results:

\begin{theorem}\label{thm:cubic_high_girth}
By generating all cubic graphs with a given lower bound on the girth and testing them for hamiltonicity we obtained the following:
\begin{itemize}
\item The $28$-vertex Coxeter graph is the only non-hamiltonian cubic graph with girth~$7$ up to at least $42$~vertices.
\item The smallest non-hamiltonian cubic graph with girth~$8$ has at least $50$~vertices.
\item The smallest non-hamiltonian cubic graph with girth~$9$ has at least $66$~vertices.
\end{itemize}
\end{theorem}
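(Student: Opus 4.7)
The plan is to prove each of the three bounds purely by exhaustive computer enumeration: generate, for the relevant girth $k \in \{7,8,9\}$, the complete list of pairwise non-isomorphic cubic graphs of girth at least $k$ up to a suitable number of vertices, and apply a hamiltonicity test to every generated graph. Because the hamiltonicity of a cubic graph is decidable and the lists in question are finite (if computationally large), this reduces the statement to a finite calculation. For generation I would rely on the programs already cited in the paper: \textit{snarkhunter} for girth~$7$, \textit{genreg} for girth~$8$, and McKay et al.'s program for girth~$9$. Since Lemma~\ref{lem:triangle_obstr} forbids triangles and any cubic hypohamiltonian graph is in particular non-hamiltonian, lower bounds for non-hamiltonian cubic graphs of girth $k$ immediately yield the cubic-girth-$k$ lower bounds required in Table~\ref{table:bounds_hypoham_graphs}.

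First I would handle girth~$7$. Generate all cubic graphs of girth at least~$7$ on $n$ vertices for $n$ up through~$42$; since such graphs must have even order, this amounts to the cases $n \in \{24,26,\dots,42\}$. For each graph, test hamiltonicity. The expected outcome, matching earlier work, is that the only non-hamiltonian example found is the Coxeter graph (on $28$ vertices), establishing the first bullet. Next I would process girth~$8$: generate all cubic graphs of girth at least~$8$ for $n$ up to~$48$ (again only even orders, starting from the Tutte $(3,8)$-cage on $30$ vertices), run the hamiltonicity test on each, and verify that every one is hamiltonian. This proves the smallest non-hamiltonian cubic graph of girth~$8$ has order at least~$50$. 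For girth~$9$ I would proceed identically, generating all cubic $(3,9)$-graphs up to $n = 64$ and checking hamiltonicity of each.

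The main obstacle is computational, not mathematical: the number of cubic graphs with large girth grows very rapidly, and with it the combined cost of generation and hamiltonicity testing. In particular the girth-$8$ and girth-$9$ enumerations in the targeted range are near the current computational frontier, and must be distributed across a cluster. To keep the hamiltonicity filter as cheap as possible I would use a fast backtracking hamiltonicity test, short-circuiting as soon as a hamiltonian cycle is found (which will be the case for the overwhelming majority of graphs), and only record and examine more carefully those candidates that survive. To guard against software errors---the only conceivable source of an incorrect conclusion in a proof of this form---I would perform independent cross-checks: compare counts of the generated cubic-graph families against published counts wherever available, rerun the hamiltonicity filter with a second, independently written implementation, and verify that the known non-hamiltonian instances (the Petersen graph, the Coxeter graph, Isaacs' snarks $J_{2k+1}$) are correctly flagged. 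These consistency tests, described in the paper's correctness-testing section, are what makes the computer-assisted argument trustworthy.
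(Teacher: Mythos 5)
Your proposal matches the paper's approach exactly: the theorem is established by exhaustively generating all cubic graphs with the stated girth bounds (using \textit{snarkhunter} for girth~7, \textit{genreg} for girth~8, and the program of McKay et al.\ for girth~9) and filtering by a hamiltonicity test, with independent cross-checks against known data as described in the paper's correctness-testing section. No substantive difference to report.
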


Since hypohamiltonian graphs are non-hamiltonian, Theorem~\ref{thm:cubic_high_girth} also implies improved lower bounds for cubic hypohamiltonian graphs (see Table~\ref{table:bounds_hypoham_graphs}).

All hypohamiltonian graphs from Table~\ref{table:counts_hypo_cubic} can also be
downloaded from the \textit{House of Graphs}~\cite{hog} at
\url{http://hog.grinvin.org/Hypohamiltonian}~.

\begin{table}
\centering
\footnotesize
	\begin{tabular}{|c || r | r | r | r | r | r |}
		\hline
		\multirow{2}{*}{Order} & \multirow{2}{*}{$g \ge 4$} & Non-ham. & \multirow{2}{*}{Hypoham.} & Hypoham. & Hypoham. & Hypoham.\\		
		 &  & and $g \ge 4$ & & and $g \ge 5$ & and $g \ge 6$ & and $g \ge 7$\\				
		\hline
10  &  6  &  1  &  1  &  1  &  0  &  0 \\
12  &  22  &  0  &  0  &  0  &  0  &  0 \\
14  &  110  &  2  &  0  &  0  &  0  &  0 \\
16  &  792  &  8  &  0  &  0  &  0  &  0 \\
18  &  7 805  &  59  &  2  &  2  &  0  &  0 \\
20  &  97 546  &  425  &  1  &  1  &  0  &  0 \\
22  &  1 435 720  &  3 862  &  3  &  3  &  0  &  0 \\
24  &  23 780 814  &  41 293  &  1  &  0  &  0  &  0 \\
26  &  432 757 568  &  518 159  &  100  &  96  &  0  &  0 \\
28  &  8 542 471 494  &  7 398 734  &  52  &  34  &  2  &  1 \\
30  &  181 492 137 812  &  117 963 348  &  202  &  139  &  1  &  0 \\
32  &  4 127 077 143 862  &  2 069 516 990  &  304  &  28  &  0  &  0 \\
		\hline
	\end{tabular}

\caption{Counts of hypohamiltonian graphs among cubic graphs. $g$ stands for girth.}
\label{table:counts_hypo_cubic}

\end{table}

\subsection{Correctness testing}
\label{sect:correctness_testing}

To make sure that our implementation of Algorithm~\ref{algo:init-algo} did not contain any programming errors, we performed various correctness tests which we will describe in this section.

Previously, all hypohamiltonian graphs up to 17 vertices were known. We verified that our program yields exactly the same graphs. Aldred, McKay, and Wormald also produced a sample of 13 hypohamiltonian graphs with 18 vertices and a sample of 10 hypohamiltonian graphs with girth 5 and 22~vertices (see~\cite{mckay-site}). We verified that our program indeed also finds these graphs.

Our program can also be restricted to generate cubic hypohamiltonian graphs. To find cubic hypohamiltonian graphs of larger orders it is actually much more efficient to use a generator for cubic graphs and then test the generated graphs for hypohamiltonicity as a filter. However we used our program to generate cubic hypohamiltonian graphs as a correctness test. We used it to generate all cubic hypohamiltonian graphs up to 22~vertices---note that these graphs must have girth at least~4 due to Lemma~\ref{lem:triangle_obstr}---and all cubic hypohamiltonian graphs with girth at least 5 up to 24~vertices. These results were in complete agreement with the known results for cubic graphs from Section~\ref{sect:cubic_case}.

Our routines for testing hamiltonicity and hypohamiltonicity were already extensively used and tested before (for example they were used in~\cite{snark-paper} to search for hypohamiltonian snarks). We also used multiple independent programs to test hamiltonicity and hypohamiltonicity ---one of those programs was kindly provided to us by Gunnar Brinkmann---and in each case the results were in complete agreement.

Furthermore, our implementation of Algorithm~\ref{algo:init-algo} (i.e.\ the program \textit{GenHypohamiltonian}) is released as open source software and the code can be downloaded and inspected at~\cite{genhypo-site}.

\section{Generating planar hypohamiltonian graphs}
\label{section:planar_hypoham_graphs}

In the early seventies, Chv\'{a}tal~\cite{Ch73} raised the problem whether \emph{planar} hypohamiltonian graphs exist and Gr\"unbaum conjectured that they do not exist~\cite{Gr74}. In 1976,
Thomassen~\cite{Th76} constructed infinitely many such graphs, the smallest among them having order~105. Subsequently, smaller planar hypohamiltonian graphs were found by Hatzel~\cite{Ha79} (order~57), the second author and Zamfirescu~\cite{ZZ07} (order~48), Araya and Wiener~\cite{WA11} (order~42), and Jooyandeh, McKay, \"{O}sterg{\aa}rd, Pettersson,
and the second author~\cite{JMOPZ} (order~40). The latter three graphs are shown in Figure~\ref{fig:planar_48}. The 40-vertex example is the smallest known planar hypohamiltonian graph, together with other 24 graphs of the same order~\cite{JMOPZ}.

\begin{figure}[h!t]
	\centering
	\includegraphics[width=1.0\textwidth]{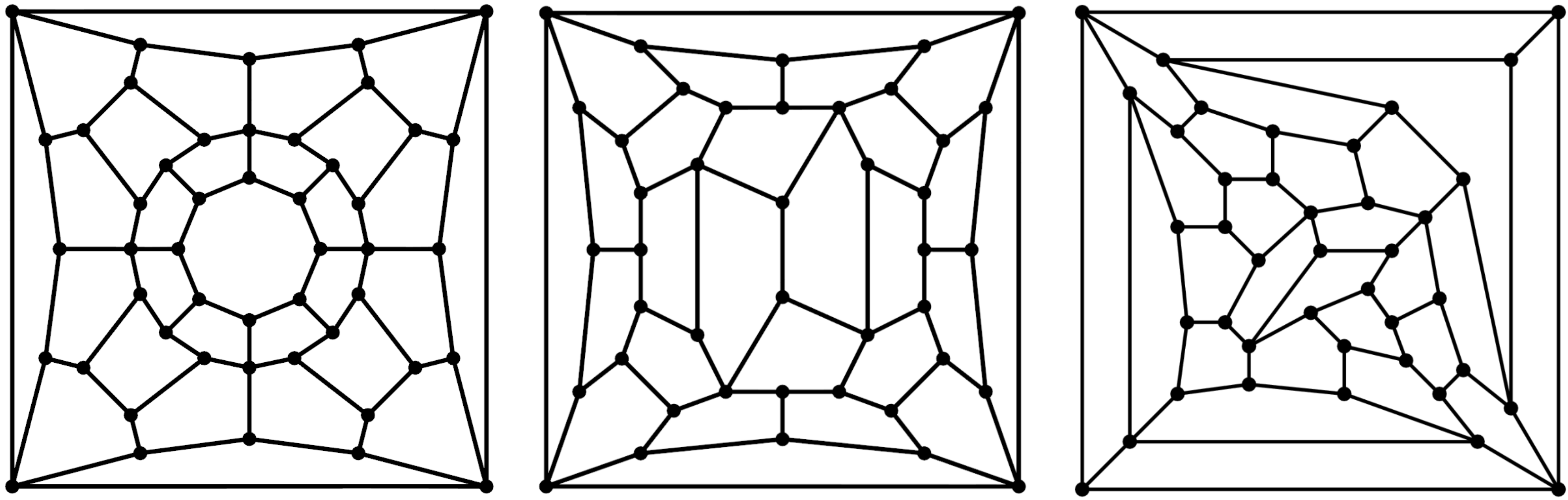}
	\caption{Planar hypohamiltonian graphs of order 48~\cite{ZZ07}, 42~\cite{WA11}, and 40~\cite{JMOPZ}, respectively.}
	\label{fig:planar_48}
\end{figure}

\subsection{The general case}

Jooyandeh, McKay, \"{O}sterg{\aa}rd, Pettersson, and the second author~\cite{JMOPZ} showed that the smallest planar hypohamiltonian graph of girth~5 has order~45, and that the graph with these properties is unique; see Figure~\ref{fig:planar_g5}.

\begin{figure}[h!t]
	\centering
	\includegraphics[width=0.4\textwidth]{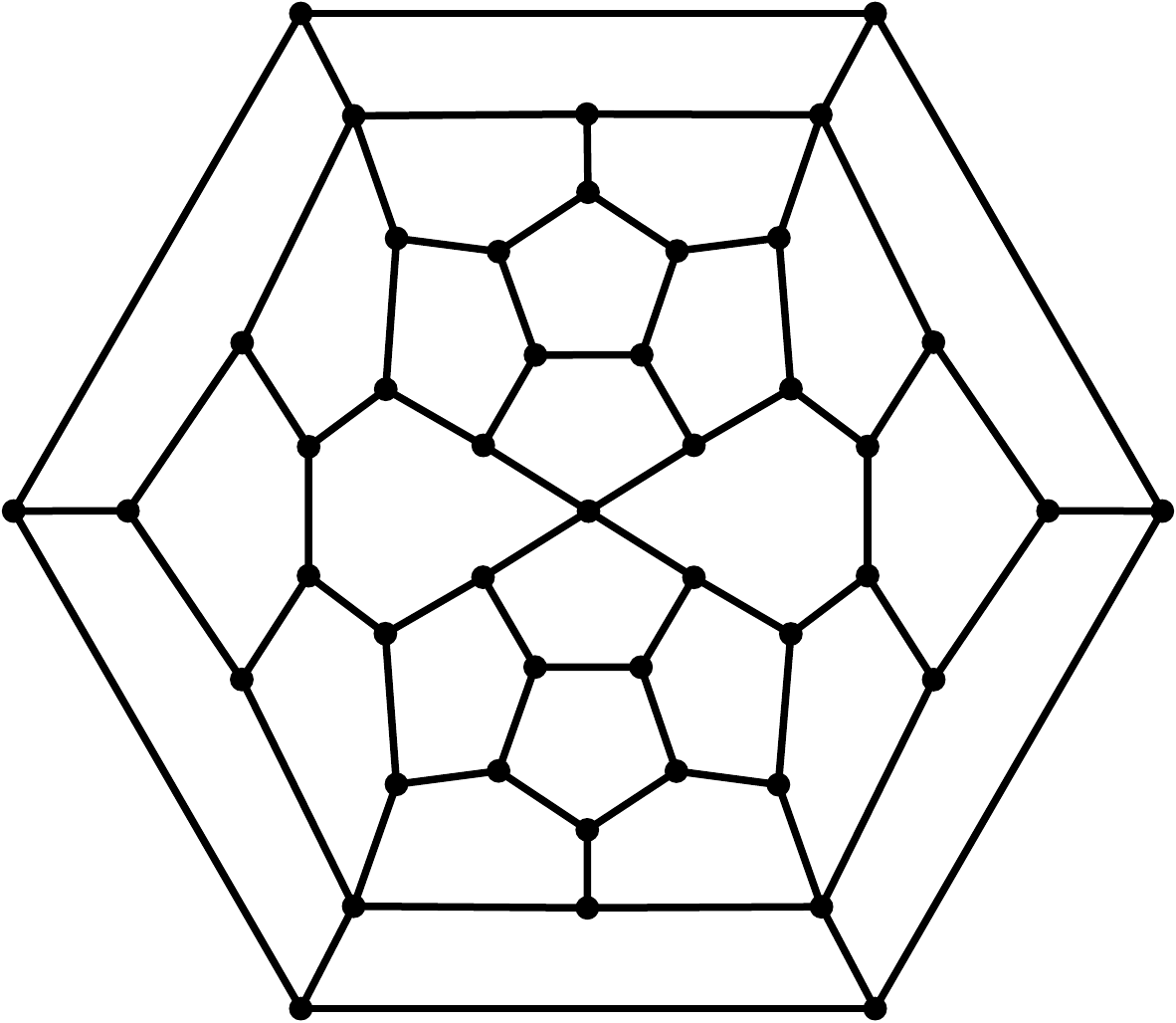}
	\caption{The unique planar hypohamiltonian graph of order $45$ and girth $5$. It was shown in~\cite{JMOPZ} that there is no smaller planar hypohamiltonian graph of girth~5.}
	\label{fig:planar_g5}
\end{figure}

Since planar hypohamiltonian graphs have girth at most~5 (due to Euler's formula), the smallest planar hypohamiltonian graph must have girth either~3 or~4. Thomassen~\cite{Th74-2} proved that, rather surprisingly, hypohamiltonian graphs of girth~3 exist. In~\cite{Th76}, Thomassen mentions how his approach from~\cite{Th74-2} can be applied to obtain a planar hypohamiltonian graph of girth~3. Using one of the aforementioned planar hypohamiltonian graphs of order~40 constructed in~\cite{JMOPZ}, one can obtain a planar hypohamiltonian graph of girth~3 and order~240. No smaller example is known.

Aldred, McKay, and Wormald~\cite{AMW97} showed that the smallest planar hypohamiltonian has order at least~18. Up until now, 18 was also the best lower bound for the order of the smallest planar hypohamiltonian graph. Jooyandeh, McKay, \"{O}sterg{\aa}rd, Pettersson, and the second author~\cite{JMOPZ} recently improved the upper bound from~42 to~40. In~\cite{JMOPZ}, the authors emphasise that no extensive computer search had been carried out to increase the lower bound for the smallest planar hypohamiltonian graph. This was one of the principal motivations of the present work.

Since the algorithm for generating all hypohamiltonian graphs presented in Section~\ref{section:hypoham_graphs} only adds edges and never removes any vertices or edges, all graphs obtained by the algorithm from a non-planar graph will remain non-planar. So in case we only want to generate planar hypohamiltonian graphs, we can prune the construction when a non-planar graph is constructed.

To this end we add a test for planarity on line~\ref{line:hamiltonian} of Algorithm~\ref{algo:construct}. We used Boyer and Myrvold's algorithm~\cite{boyer2004cutting} to test if a graph is planar.

\bigskip

\textbf{Additional properties of planar hypohamiltonian graphs}
\begin{itemize}
\item Using a theorem of Whitney~\cite{Wh31}, Thomassen showed~\cite{Th81} that a planar hypohamiltonian graph does not contain a maximal planar graph $G$, where $G \ne K_3$.
\item Let $G$ be a planar hypohamiltonian graph. Let $\kappa(G)$, $\lambda(G)$, and $\delta(G)$ denote the vertex-connectivity, minimum degree, and edge-connectivity of $G$, respectively. Then $\kappa(G) = \lambda(G) = \delta(G) = 3$ (for a proof, see~\cite{JMOPZ}).
\end{itemize}

We also present a result from \cite{JMOPZ} which restricts the family of polyhedra in which the smallest planar hypohamiltonian graph must reside. For further details, see~\cite{JMOPZ}. In that article, the operation 4-\emph{face deflater} ${\cal FD}_4$ is defined which squeezes a 4-face of a plane graph into a path of length~2. The inverse of this operation is called a 2-\emph{path inflater} ${\cal PI}_2$, which expands a path of length~2 into a 4-face. Let ${\cal D}_5 (f)$ be the set of all plane graphs with $f$~faces and minimum degree at least~5. Let $G^\star$ denote the dual of a planar graph $G$, and put $${\cal M}_f^4(n) = \begin{cases}
                            \{ G^\star : G \in {\cal D}_5(n) \}  & f = 0\\
                            \bigcup_{G \in M_{f-1}^4(n-1)} {\cal PI}_2(G) & f > 0
                        \end{cases} \quad {\rm and} \quad {\cal M}_f^4 = \bigcup_n {\cal M}_f^4(n).$$

\begin{theorem}[Jooyandeh et al.~\cite{JMOPZ}]
Let $G$ be the smallest planar hypohamiltonian graph. Then $G \notin {\cal M}_f^4$.
\end{theorem}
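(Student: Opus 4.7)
The plan is to argue by contradiction. Suppose the smallest planar hypohamiltonian graph $G$ satisfies $G \in {\cal M}_f^4$ for some $f \geq 0$, and let $f^\ast$ be the smallest such value of $f$. I then handle the base case $f^\ast = 0$ and the inductive step $f^\ast \geq 1$ separately, in each case extracting a contradiction from the minimality of $G$.

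For $f^\ast = 0$, $G$ is the planar dual of some graph $H \in {\cal D}_5$, so the faces of $G$ correspond to the vertices of $H$; since $H$ has minimum degree at least $5$, every face of $G$ has size at least $5$, and in particular $G$ has girth at least $5$. By the result of Jooyandeh et al.~\cite{JMOPZ}, the smallest planar hypohamiltonian graph of girth $5$ has exactly $45$ vertices. On the other hand, the smallest planar hypohamiltonian graph has order at most $40$, again by~\cite{JMOPZ}. Thus $|V(G)| \leq 40 < 45$, a contradiction.

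For $f^\ast \geq 1$, there exists a graph $G' \in {\cal M}_{f^\ast - 1}^4$ with $G \in {\cal PI}_2(G')$, so $G$ contains a distinguished 4-face $F = abcd$ whose deflation ${\cal FD}_4(F)$ returns $G'$; let $v \in V(G')$ denote the image of the merged pair $\{b,d\}$. Then $G'$ is planar and $|V(G')| = |V(G)| - 1$. The key claim, from which the theorem follows, is that $G'$ is itself hypohamiltonian; granting this, $G'$ would be a strictly smaller planar hypohamiltonian graph, contradicting the minimality of $G$. To verify the claim one shows (i) $G'$ is non-hamiltonian, and (ii) $G' - w$ is hamiltonian for every $w \in V(G')$. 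For (i), any hamiltonian cycle $\mathfrak{h}'$ of $G'$ must traverse $v$ via the edges $av$ and $vc$, and these two edges re-inflate to a hamiltonian cycle $\mathfrak{h}$ of $G$ by routing through one side of the 4-face $F$, contradicting the non-hamiltonicity of $G$. For (ii) with $w \notin \{v\} \cup (V(F) \setminus \{b,d\})$, hamiltonian cycles of $G - w$ project naturally to hamiltonian cycles of $G' - w$ via the same face-to-path correspondence.

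The main obstacle is the subcase $w = v$ of (ii), where one must build a hamiltonian cycle of $G' - v$ from the hamiltonian cycles of $G - b$ and $G - d$ guaranteed by the hypohamiltonicity of $G$. The delicate point is that contracting the 4-face $F$ identifies the neighbourhoods of $b$ and $d$, and one needs to show, using the planarity of $F$ and the fact that $b$ and $d$ inherit disjoint neighbourhoods from the split of $v$ under ${\cal PI}_2$, that at least one of these hamiltonian cycles transfers unambiguously to $G' - v$. This local case analysis, while elementary, is where the technical work is concentrated and is the step one should expect to take the most care with in a complete write-up.
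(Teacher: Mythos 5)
You should first note that the paper does not prove this statement at all: it is quoted from \cite{JMOPZ}, and the argument there is essentially a finite computation. Since the smallest planar hypohamiltonian graph has at most $40$ vertices, one only needs to generate the finitely many members of ${\cal M}_f^4(n)$ for small $n$ (duals of minimum-degree-$5$ plane graphs, followed by repeated applications of ${\cal PI}_2$) and verify that none is hypohamiltonian. Your attempt to replace this with a structural induction on $f$ does not work, and the failure is concentrated precisely at the point you flag as the ``main obstacle''---but the problem is not a delicate case analysis; the key claim is false.

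The inductive step breaks at claim (i). In $G'$ the merged vertex $v$ has degree $\deg_G(b)+\deg_G(d)-2\ge 4$, so a hamiltonian cycle of $G'$ has no reason to traverse $v$ via $av$ and $vc$. If it uses two edges at $v$ both inherited from $b$, lifting it to $G$ produces only a hamiltonian cycle of $G-d$, which exists anyway because $G$ is hypohamiltonian---no contradiction. If it uses one edge inherited from $b$ and one from $d$, the lift is a hamiltonian $b$--$d$ path of $G$ that cannot be closed, since $b$ and $d$ are opposite corners of the $4$-face and $bd\notin E(G)$. So non-hamiltonicity of $G'$ does not follow, and indeed ${\cal FD}_4$ does not preserve hypohamiltonicity in general; if your induction were sound it would show that \emph{no} planar hypohamiltonian graph lies in ${\cal M}_f^4$ for $f\ge 1$, a much stronger statement than the one \cite{JMOPZ} proves, and the restriction to the \emph{smallest} such graph in the theorem would be pointless. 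Your base case also has a gap: a plane graph all of whose faces have size at least $5$ need not have girth at least $5$, because a $4$-cycle in $G=H^\star$ corresponds to a non-trivial $4$-edge-cut of $H$, which minimum degree $5$ in $H$ does not exclude; hence you cannot directly invoke $h_5=45$.
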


We extended our algorithm from Section~\ref{section:hypoham_graphs} to generate planar hypohamiltonian graphs and obtained the following results with it.

\begin{theorem}\label{thm:bound_planar}
The smallest planar hypohamiltonian graph has at least $23$~vertices.
\end{theorem}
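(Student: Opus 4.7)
The plan is purely computational: I would extend the generation algorithm $\mathfrak A$ from Section~\ref{section:hypoham_graphs} with a planarity test and run it exhaustively for each order $n$ from $18$ up to $22$, verifying that the output list is empty in every case. Correctness rests on Theorem~\ref{lem:diamond-algo-correct} together with the key monotonicity observation that planarity is preserved under edge deletion but not under edge addition; hence any branch of the recursion in Algorithm~\ref{algo:construct} that reaches a non-planar intermediate graph can be safely pruned, since none of its descendants can ever be planar, and in particular none can be a planar hypohamiltonian graph.

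Concretely, I would add a planarity check using the linear-time Boyer--Myrvold algorithm~\cite{boyer2004cutting} next to the hamiltonicity test on line~\ref{line:hamiltonian} of Algorithm~\ref{algo:construct}, so that a recursive call is aborted as soon as the current graph is found to be non-planar. Since Aldred, McKay, and Wormald~\cite{AMW97} have ruled out hypohamiltonian graphs of order at most $17$ entirely, it then suffices to invoke Algorithm~\ref{algo:init-algo} for $n \in \{18,19,20,21,22\}$ and observe that the output list $\mathcal H$ is empty in each instance; the bound $|V(G)| \ge 23$ follows immediately.

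The main obstacle is computational feasibility. Even the unrestricted enumeration becomes costly by $n=20$ (see Tables~\ref{table:times_hypo_g3} and~\ref{table:times_hypo_g5}), so the argument stands or falls with how aggressively the planarity cut fires. Fortunately non-planarity is typically witnessed by a small Kuratowski subdivision that appears high in the recursion tree, which in practice removes whole swaths of the search space very early; additional speedups can be obtained by folding in the structural restrictions on planar hypohamiltonian graphs collected in Section~\ref{section:planar_hypoham_graphs}, notably $\kappa(G)=\lambda(G)=\delta(G)=3$, so that branches producing a planar graph with a $2$-cut, a vertex of degree $2$, or a $2$-edge-cut can also be cut immediately. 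With these prunings in place, covering the range $18 \le n \le 22$ becomes tractable, and the empty outputs yield Theorem~\ref{thm:bound_planar}.
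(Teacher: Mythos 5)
Your proposal matches the paper's approach essentially verbatim: the authors likewise exploit the fact that the algorithm only adds edges (so non-planarity, once present, persists) to prune at a Boyer--Myrvold planarity test inserted at line~\ref{line:hamiltonian} of Algorithm~\ref{algo:construct}, and then run the exhaustive search through order~22 with empty output. The argument is correct and no further comparison is needed.
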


\begin{theorem}\label{thm:bound_planar_g4}
The smallest planar hypohamiltonian graph with girth at least~$4$ has at least $27$~vertices.
\end{theorem}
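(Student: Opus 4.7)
The plan is to prove this theorem in the same computational spirit as Theorem~\ref{thm:bound_planar}, by exhaustively enumerating all planar hypohamiltonian graphs of girth at least~$4$ on at most $24$~vertices and confirming that the resulting list is empty. I would take the algorithm $\mathfrak{A}$ described in Section~\ref{section:hypoham_graphs}, extended by the planarity pruning already introduced for Theorem~\ref{thm:bound_planar} (the Boyer--Myrvold test inserted on line~\ref{line:hamiltonian} of Algorithm~\ref{algo:construct}), and further restrict the search to graphs of girth at least~$4$, i.e.\ triangle-free graphs. Since Algorithm~\ref{algo:construct} only ever adds edges, both ``non-planar'' and ``contains a triangle'' are monotone properties under edge addition, so pruning the recursion as soon as either is detected is sound: every planar hypohamiltonian graph of girth at least~$4$ on $n$ vertices arises from some spanning starting configuration of Algorithm~\ref{algo:init-algo} and passes unharmed through every extra pruning step.

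In more detail, I would modify the test on line~\ref{line:hamiltonian} to reject $G$ whenever $G$ is non-planar or contains a triangle, and then invoke Algorithm~\ref{algo:init-algo} for each $n \in \{18,19,\dots,24\}$. For $18 \le n \le 22$, Theorem~\ref{thm:bound_planar} already guarantees an empty output and the runs serve only as a consistency check. The substantive new work lies in $n = 23$ and $n = 24$, where we must verify that no triangle-free planar hypohamiltonian graph exists. The triangle-freeness actually helps the algorithm: in a planar graph of girth at least~$4$ Euler's formula gives $|E(G)| \le 2n - 4$, so combined with the minimum-degree-$3$ requirement of Lemma~\ref{lem:triangle_obstr}'s generalisation (every hypohamiltonian graph has $\delta \ge 3$), the edge budget is extremely tight and the branching in Algorithm~\ref{algo:construct} shrinks dramatically compared to the general girth-$4$ columns of Tables~\ref{table:number_of_hypoham_graphs} and~\ref{table:times_hypo_g4}. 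Moreover the diamond obstruction (Proposition~\ref{prop:diamond_obstr}) cannot fire, which focuses the pruning essentially on the type~A, type~C and degree-$2$ rules.

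The main obstacle will be computational rather than conceptual: even with the sharper edge bound, the number of intermediate graphs explored at $n = 23$ and $n = 24$ grows very quickly, and the cubic subcase (where the type~A/B/C obstructions are weakest and many partial graphs survive) is likely to dominate the running time, just as it does for higher girth in Section~\ref{sect:results_algo}. I would therefore split the enumeration by the maximum degree $D$ of Algorithm~\ref{algo:init-algo} and, for $D = 3$, either run the cubic branch of $\mathfrak{A}$ directly or, as the authors do for girth~$\ge 6$, replace it by a dedicated generator of cubic planar graphs of girth at least~$4$ followed by a hypohamiltonicity filter; the non-existence of such a cubic example on $\le 24$ vertices is in any case implied by the (much stronger) bound of Theorem~\ref{thm:cubic_high_girth} restricted to the planar case together with Araya--Wiener's lower bound for cubic planar hypohamiltonian graphs. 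Finally, correctness can be cross-checked exactly as in Section~\ref{sect:correctness_testing}: the program should reproduce the empty lists for $n \le 22$ known from Theorem~\ref{thm:bound_planar}, and the two new empty outputs for $n = 23, 24$ establish the claimed bound of $25$.
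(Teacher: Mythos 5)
Your proposal matches the paper's approach: the authors simply extend ${\mathfrak A}$ with the Boyer--Myrvold planarity test and the girth-$\ge 4$ pruning (both monotone under edge addition, exactly as you argue) and run the exhaustive search through order $24$, obtaining the empty counts recorded in Table~\ref{table:times_planar_hypo_g4}. One small slip in an aside: Theorem~\ref{thm:cubic_high_girth} concerns girth $7$, $8$, and $9$ and says nothing about cubic planar girth-$4$ graphs, but the cubic subcase is indeed already excluded by the Araya--Wiener lower bound of $44$ that you also cite.
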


When we combine this with the known upper bounds, we get the following corollary.

\begin{corollary}
Let $h$ ($h_g$) denote the order of the smallest planar hypohamiltonian graph (of girth $g$). We have $$23 \le h \le 40, \quad 23 \le h_3 \le 240, \quad 27 \le h_4 \le 40, \quad {\rm {\it and}} \quad h_5 = 45.$$
\end{corollary}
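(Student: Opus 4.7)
The plan is to simply collate the two theorems just proved with the upper bound constructions from the literature that were recalled earlier in this section. First I would handle the first inequality: the lower bound $23\le h$ is exactly Theorem~\ref{thm:bound_planar}, while the upper bound $h\le 40$ is witnessed by any of the 25 planar hypohamiltonian graphs of order 40 constructed in~\cite{JMOPZ} (one of which appears on the right of Figure~\ref{fig:planar_48}).

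Next I would deduce the girth-3 bounds. Since every planar hypohamiltonian graph of girth 3 is in particular a planar hypohamiltonian graph, the lower bound on $h$ gives $h_3\ge h\ge 23$. The upper bound $h_3\le 240$ follows by applying Thomassen's procedure from~\cite{Th74-2,Th76}---described in the paragraph preceding Theorem~\ref{thm:bound_planar}---which turns a given planar hypohamiltonian graph on $n$ vertices into one of girth 3 on $6n$ vertices, to a 40-vertex example from~\cite{JMOPZ}.

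For the girth-4 bounds, the lower bound $h_4\ge 25$ is Theorem~\ref{thm:bound_planar_g4}. For the upper bound, I would observe that since planar 3-connected graphs have girth at most 5, a 40-vertex planar hypohamiltonian graph from~\cite{JMOPZ} has girth 3, 4, or 5; it cannot have girth 5 because~\cite{JMOPZ} also established that every planar hypohamiltonian graph of girth 5 has at least 45 vertices, and it cannot have girth 3 since the smallest known planar hypohamiltonian graph of girth 3 has 240 vertices (one verifies directly from the pictures in~\cite{JMOPZ} that these graphs contain 4-cycles but no triangles). Therefore $h_4\le 40$. Finally, $h_5=45$ is exactly the result of~\cite{JMOPZ} illustrated in Figure~\ref{fig:planar_g5}.

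There is no real obstacle here: the corollary is a bookkeeping statement, and the only thing to double-check is that the cited 40-vertex graphs indeed have girth~4 rather than girth~3, which is transparent from the explicit constructions in~\cite{JMOPZ}.
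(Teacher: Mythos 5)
Your proposal is correct and matches the paper's intent exactly: the paper offers no explicit proof, simply stating that the corollary follows by combining Theorems~\ref{thm:bound_planar} and~\ref{thm:bound_planar_g4} with the known upper bounds (the 40-vertex graphs and the unique 45-vertex girth-5 graph from~\cite{JMOPZ}, and the 240-vertex girth-3 example obtained via Thomassen's construction). The only cosmetic remark is that your girth-4 upper bound should rest on the direct verification that a 40-vertex graph from~\cite{JMOPZ} contains a 4-cycle but no triangle (as you note parenthetically), not on the non-existence of \emph{known} smaller girth-3 examples, which by itself proves nothing.
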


The running times of our implementation of this algorithm restricted to planar graphs is given in Tables~\ref{table:times_planar_hypo} and~\ref{table:times_planar_hypo_g4}. For the larger cases we did not include any running times since these were executed on a heterogeneous cluster and the parallelisation also caused a non-negligible overhead. The column \textit{``Max.\ nr.\ edges added''} denotes the maximum number of edges added by Algorithm~\ref{algo:construct} to a graph constructed by Algorithm~\ref{algo:init-algo}.

\begin{table}
\centering
\small
	\begin{tabular}{|c || c | r | r | c |}
		\hline
		Order & \# hypoham. & Time (s) & Increase & Max.\ nr.\ edges added\\
		\hline
16  &  0  &  4  &    &   9\\
17  &  0  &  35  &  8.75  &   11\\
18  &  0  &  235  &  6.71  &  14 \\
19  &  0  &  1 245  &  5.30  &  16 \\
20  &  0  &  13 517  &  10.86  &  17 \\
21  &  0  &  109 294  &  8.09  &  19 \\
22  &  0  &    &    &   \\
		\hline
	\end{tabular}

\caption{Counts and generation times for planar hypohamiltonian graphs.}
\label{table:times_planar_hypo}

\end{table}

\begin{table}
\centering
\small
	\begin{tabular}{|c || c | r | r | c |}
		\hline
		Order &  \# hypoham.\ $g \geq 4$ & Time (s) & Increase & Max.\ nr.\ edges added\\
		\hline
16  &  0  &  2  &    &   6\\
17  &  0  &  11  &  5.50  &  7 \\
18  &  0  &  35  &  3.18  &  8 \\
19  &  0  &  231  &  6.60  &  10 \\
20  &  0  &  1 649  &  7.14  &  10 \\
21  &  0  &  9 545  &  5.79  &  12 \\
22  &  0  &  53 253  &  5.58  &  12 \\
23  &  0  &    &    &   \\
24  &  0  &    &    &   \\
		\hline
	\end{tabular}

\caption{Counts and generation times for planar hypohamiltonian graphs with girth at least 4.}
\label{table:times_planar_hypo_g4}

\end{table}

\subsection{The cubic case}

\label{section:planar_cubic}

Chv\'{a}tal~\cite{Ch73} asked in 1973 whether cubic planar hypohamiltonian graphs exist. His question was settled in 1981 by Thomassen~\cite{Th81}, who constructed such graphs of order $94+4k$ for every $k \ge 0$. However, the following two questions raised in~\cite{HS93} remained open: (i)~Are there smaller cubic planar hypohamiltonian graphs? (ii)~Does there exist a positive integer $n_0$ such that for every even~$n \ge n_0$ there exists a cubic planar hypohamiltonian graph of order~$n$? Araya and Wiener answered both of these questions affirmatively in~\cite{AW11}. Concerning~(i), they showed that there exists a cubic planar hypohamiltonian graph of order~70. No smaller such graph is known. Regarding~(ii), Araya and Wiener~\cite{AW11} showed that there exists a cubic planar hypohamiltonian graph of order $n$ for every even $n \ge 86$. The second author~\cite{Za15} improved this result by showing that such graphs exist for every even $n \ge 74$.

Until recently, all known cubic planar hypohamiltonian graphs had girth~4. (Recall that by Lemma~\ref{lem:triangle_obstr} cubic hypohamiltonian graphs must have girth at least 4). Due to a recent result of McKay~\cite{Mc}, we now know that cubic planar hyohamiltonian graphs of girth 5 exist, and that the smallest ones have order~76. So the smallest cubic planar hypohamiltonian must have girth exactly~4.

From the results of Aldred, Bau, Holton, and McKay~\cite{ABHM00} it follows that there is no cubic planar hypohamiltonian graph on $42$ or fewer vertices. (Completing the work of many researchers, Holton and McKay~\cite{HM} showed that the order of the smallest non-hamiltonian cubic planar $3$-connected graph is~38; one of the graphs realising this minimum is the famous Lederberg-Bos\'{a}k-Barnette graph). Moreover, all 42-vertex graphs presented in~\cite{ABHM00} have exactly one face whose size is not congruent to 2~modulo~3, and it was already observed by Thomassen~\cite{Th74-1} that such a graph cannot be hypohamiltonian. Summarising, prior to this work we knew that the smallest planar hypohamiltonian graph has girth~4 and order at least~44 and at most~70. 

\bigskip

\textbf{Additional properties of cubic planar hypohamiltonian graphs}

\bigskip

We now also mention obstructions specifically for cubic planar hypohamiltonian graphs. For the first obstruction below, we call a face~$F$ a \emph{$k$-face} if ${\rm size}(F) = k {\rm \ mod \ } 3$. Let $G$ be a cubic planar hypohamiltonian graph.

\begin{itemize}
\item Araya and Wiener~\cite{AW11} extended a remark of Thomassen~\cite{Th74-1} and showed that (i) $G$ contains at least three non-$2$-faces, (ii) if $G$ has exactly three non-$2$-faces, then these three non-$2$-faces do not have a common vertex, and (iii) two 1-faces or a 1-face and a 0-face cannot be adjacent.

\item Kardo{\v{s}}~\cite{Ka} has recently proven Barnette's conjecture which states that every cubic planar 3-connected graph in which each face has size at most~6 is hamiltonian. Thus, $G$ must contain a face of size at least~7.
\end{itemize}

By using the program \textit{plantri}~\cite{brinkmann_07} we generated all cubic planar cyclically 4-connected graphs with girth~4 up to 52~vertices and tested them for hypohamiltonicity. (Note that prior to our result, the best lower bound for the order of the smallest cubic planar hypohamiltonian graph was~44, see~\cite{AW11}). No hypohamiltonian graphs were found, so we have in summary the following.

\begin{theorem}
The smallest cubic planar hypohamiltonian graph has girth~$4$, at least $54$ and at most $70$~vertices.
\end{theorem}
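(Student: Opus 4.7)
The plan is to split the statement into three independent claims and handle them separately: that the girth equals $4$, that the order is at most $70$, and that the order is at least $54$.

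For the girth, I would combine three earlier observations. By Lemma~\ref{lem:triangle_obstr} a cubic hypohamiltonian graph contains no triangle, so its girth is at least $4$. Euler's formula applied to a $3$-connected planar graph (and every hypohamiltonian graph is $3$-connected) gives girth at most $5$. McKay~\cite{Mc} has shown that the smallest cubic planar hypohamiltonian graph of girth $5$ has order exactly $76$, which is strictly larger than the known upper bound of $70$ due to Araya and Wiener~\cite{AW11}. Consequently the smallest example realises girth $4$ rather than girth $5$. The upper bound of $70$ itself is immediate from the explicit $70$-vertex construction of Araya and Wiener~\cite{AW11}, so nothing further is required there.

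The substantive part is the lower bound of $54$. Here the plan is an exhaustive computer enumeration. Since every hypohamiltonian graph is cyclically $4$-connected and, by the paragraph above, the smallest cubic planar hypohamiltonian graph must have girth $4$, it suffices to generate every cubic planar cyclically $4$-connected graph of girth $4$ on at most $52$ vertices and test each for hypohamiltonicity. I would use \textit{plantri}~\cite{brinkmann_07} to produce this class pairwise non-isomorphically, since \textit{plantri} supports exactly these structural restrictions as native options and therefore avoids any wasteful post-filtering. The generated graphs are then piped into the hypohamiltonicity routine verified in Section~\ref{sect:correctness_testing}. If no hypohamiltonian graph is discovered, then, together with the fact that cubic graphs have even order, the smallest cubic planar hypohamiltonian graph has at least $54$ vertices.

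The main obstacle will be sheer computational scale: the number of cubic planar cyclically $4$-connected graphs with girth $4$ grows very rapidly as the order approaches $52$, and each candidate still has to pass through a hypohamiltonicity test, which is itself expensive. Feasibility therefore rests on three ingredients: \textit{plantri}'s ability to enumerate exactly the restricted class (cubic, planar, cyclically $4$-connected, girth at least $4$) rather than filtering from a larger superset; a fast hamiltonicity test that rejects the overwhelming majority of candidates immediately after inspecting the hamiltonicity of the graph itself; and distribution of the generation over a cluster, analogous to the parallelisation already employed elsewhere in this paper.
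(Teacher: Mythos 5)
Your proposal matches the paper's argument essentially step for step: girth exactly $4$ via Bondy's triangle lemma, Euler's formula, and McKay's $76$-vertex result versus the Araya--Wiener $70$-vertex upper bound; and the lower bound of $54$ via a \textit{plantri} enumeration of all cubic planar cyclically $4$-connected girth-$4$ graphs through $52$ vertices filtered by a hypohamiltonicity test, with parity of cubic orders closing the gap. This is the same decomposition and the same computation the authors carry out, so nothing further is needed.
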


As mentioned earlier, McKay~\cite{Mc} recently showed that there exist no cubic planar hypohamiltonian graphs of girth~5 with less than 76~vertices, and exactly three such graphs of order~76. All three graphs have trivial automorphism group. In that paper the natural question is raised whether infinitely many such graphs exist. Using the program \textit{plantri}~\cite{brinkmann_07} we generated all cubic planar cyclically 4-connected graphs with girth 5 with 78~vertices and tested them for hypohamiltonicity. This yielded exactly one such graph. Although we are not able to settle McKay's question, in the following theorem we make a first step.

\begin{theorem}
\label{thm:cubic_planar_g5_nontriv}
There is exactly one cubic planar hypohamiltonian graph of order~$78$ and girth~$5$. This graph is shown in Figure~\ref{fig:planar_cubic_g5}. It is the smallest cubic planar hypohamiltonian graph of girth~$5$ with a non-trivial automorphism group and has $D_{3h}$ symmetry (as an abstract group, this is the dihedral group of order~$12$).
\end{theorem}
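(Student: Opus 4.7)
The plan is a computational enumeration that parallels the proof of the $54$-vertex lower bound for the smallest cubic planar hypohamiltonian graph given earlier in the paper. Since cubic graphs have even order and McKay~\cite{Mc} has already established that the smallest cubic planar hypohamiltonian graphs of girth~$5$ have order~$76$, and that all three such $76$-vertex graphs have trivial automorphism group, the next candidate order to investigate for both claims (existence of a unique graph at order~$78$ and smallness with respect to non-trivial symmetry) is precisely~$78$. First I would invoke \textit{plantri}~\cite{brinkmann_07} with the switches that restrict the output to cyclically $4$-edge-connected cubic planar graphs of girth at least~$5$ on $78$~vertices. The cyclic $4$-edge-connectivity restriction is legitimate here, since by Lemma~\ref{lem:edge-cut} every $3$-edge-cut in a hypohamiltonian graph isolates a single vertex, so a cubic hypohamiltonian graph admits no $3$-edge-cut whose two sides both contain a cycle.

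Next, I would pipe each graph produced by \textit{plantri} through the hypohamiltonicity test already developed and cross-validated in Section~\ref{sect:correctness_testing}. Assuming the theorem is correct, exactly one survivor~$H$ emerges from this filter, establishing both existence and uniqueness. For this~$H$ I would then invoke \textit{nauty}~\cite{nauty-website, mckay_14} to obtain $|{\rm Aut}(H)|$ together with a generator set; the group order~$12$ should appear directly, and reading off a $3$-fold rotation generator together with a reflection that commutes with it identifies the abstract group as the dihedral group of order~$12$. To nail down the concrete symmetry label $D_{3h}$ rather than merely the abstract group, I would use the plane embedding returned by \textit{plantri} to confirm that the three-fold rotation axis lies perpendicular to a mirror plane fixed setwise by the automorphism group, which is the defining feature of $D_{3h}$ as a point group.

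The main obstacle is the sheer computational cost of the enumeration at order~$78$ together with running the hypohamiltonicity test on every candidate; this is the same bottleneck that limits the lower bound of $54$ in Section~\ref{section:planar_cubic} and it will almost certainly force the run to be distributed across a cluster, as was done for the larger cases in Tables~\ref{table:times_hypo_g3}--\ref{table:times_hypo_g5}. Correctness of the conclusion is robust: \textit{plantri} is known to produce a complete isomorphism-free list of cubic planar graphs satisfying the imposed restrictions, the hypohamiltonicity routine has been tested against independent implementations (notably the one provided by Brinkmann as noted in Section~\ref{sect:correctness_testing}), and \textit{nauty}'s computation of the automorphism group is canonical. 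Together with McKay's result for order $76$, this yields both the uniqueness at order~$78$ and the minimality among cubic planar hypohamiltonian graphs of girth~$5$ with non-trivial automorphism group.
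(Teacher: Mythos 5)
Your proposal matches the paper's approach exactly: the authors likewise use \textit{plantri} to generate all cubic planar cyclically 4-connected graphs of girth~5 on 78 vertices, filter them with their hypohamiltonicity test to find the unique survivor, and combine this with McKay's result that the three 76-vertex examples all have trivial automorphism group. Your additional justification of the cyclic 4-connectivity restriction via Lemma~\ref{lem:edge-cut} and the explicit use of \textit{nauty} to certify the $D_{3h}$ symmetry are sensible elaborations of steps the paper leaves implicit.
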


The graph from Theorem~\ref{thm:cubic_planar_g5_nontriv} can also be downloaded and inspected at the database of interesting graphs from the \textit{House of Graphs}~\cite{hog} by searching for the keywords ``hypohamiltonian * D3h".

\begin{figure}[h!t]
    \centering
    \subfloat[]{\label{fig:planar_cubic_g5a}\includegraphics[width=0.45\textwidth]{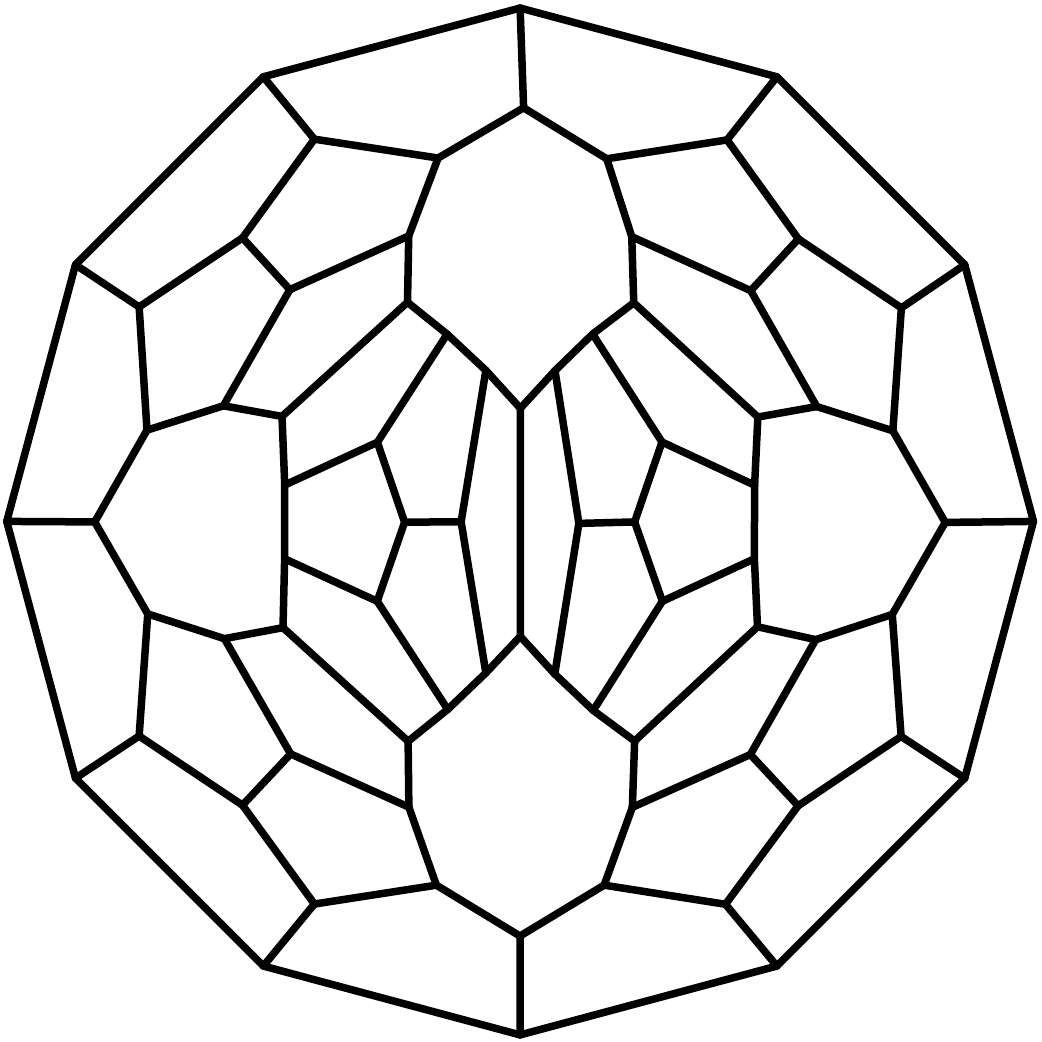}} \qquad
    \subfloat[]{\label{fig:planar_cubic_g5b}\includegraphics[width=0.45\textwidth]{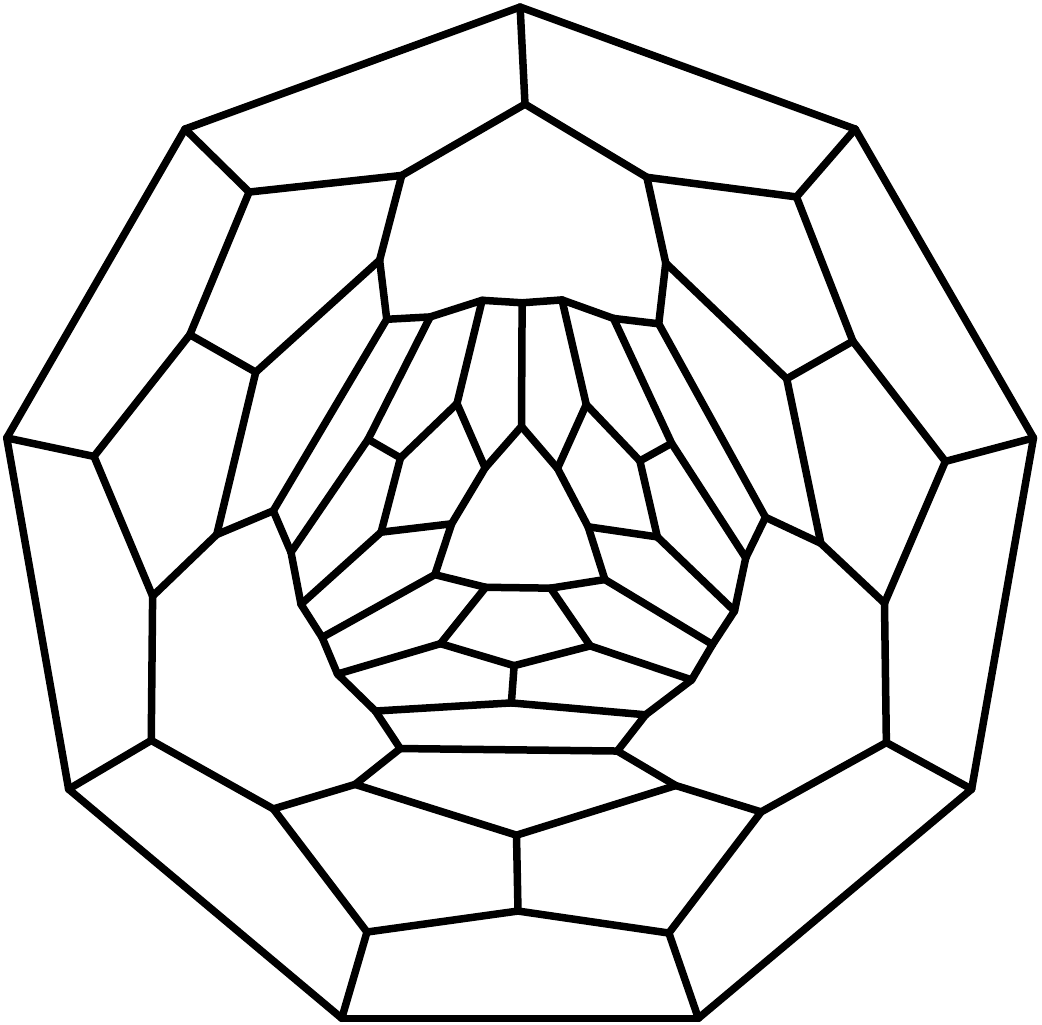}}\\
    \caption{The smallest cubic planar hypohamiltonian graph of girth~$5$ with a non-trivial automorphism group. It has 78 vertices and $D_{3h}$ symmetry. Both Figure~\ref{fig:planar_cubic_g5a} and Figure~\ref{fig:planar_cubic_g5b} show different symmetries of the same graph.}
    \label{fig:planar_cubic_g5}
\end{figure}

\section{Outlook}

We would like to conclude with comments and open questions which might be worth pursuing as future work.

\begin{enumerate}
\item We have seen that the order of the smallest planar hypohamiltonian graph must lie between 23 and 40. Let us read ``being planar'' as ``having crossing number~0''. It is not difficult to show that the Petersen graph is the smallest hypohamiltonian graph with crossing number~2, see e.g.~\cite{Za12}. The second author showed in~\cite{Za12} that there exists a hypohamiltonian graph with crossing number~1 and order~46. Recently, Wiener~\cite{Wi16} constructed a hypohamiltonian graph with crossing number~1 and order~36. This is the smallest example up to date---so we ask here: what is the order of the smallest hypohamiltonian graph with crossing number~1?

\item In the deep and technical paper~\cite{Sa87}, Sanders defines a graph $G$ to be \emph{almost hamiltonian} if every subset of $|V(G)| - 1$ vertices is contained in a cycle. Every hypocyclic (and thus every hypohamiltonian) graph is almost hamiltonian, but the converse is not necessarily true: take a hamiltonian graph $G$ in which there exists a vertex $v$ such that $G - v$ is not hamiltonian. Sanders characterises almost hamiltonian graphs in terms of circuit injections and binary matroids (for the definitions, see~\cite{Sa87}). Possibly an algorithmic implementation of Sanders' characterisation is worth pursuing.

\item Ad finem, we discuss the order of the smallest planar hypohamiltonian graph. In this article, we have increased the lower bound from 18 to 23, but there is still a considerable gap to 40, the best available upper bound~\cite{JMOPZ}. As mentioned in~\cite{JMOPZ}, it would be somewhat surprising if every extremal graph would have trivial automorphism group---note that the smallest planar hypohamiltonian graphs we know of, the 40-vertex graphs from~\cite{JMOPZ}, all have only identity as automorphism. An exhaustive search for graphs with prescribed automorphisms might lead to smaller planar hypohamiltonian graphs. 
\end{enumerate}

\section*{Acknowledgements}
Most computations for this work were carried out using the Stevin Supercomputer Infrastructure at Ghent University. We also would like to thank Gunnar Brinkmann for providing us with an independent program for testing hypohamiltonicity.


\bibliographystyle{plain}
\bibliography{references}

\end{document}